\newtheorem{lemma}{Lemma}[section]
\newtheorem{corollary}{Corollary}[section]
\newtheorem{theorem}{Theorem}[section]
\newtheorem{remark}{Remark}[section]
\def\rr{\mathbb{R}}
\def\cn{\mathbb{C}}
\def\rr{\mathbb{R}}
\def\eps{\varepsilon}
\def\a{\alpha} 
\def\b{\beta} 
\def\c{\gamma}
 \def\e{\varepsilon}
\def\RR{{\mathbb R}}\def\QQ{{\mathbb Q}}
\def\NN{{\mathbb N}}
\def\var{{\rm Var}}
\DeclareMathOperator\arctanh{arctanh}
\title[Dispersion for BV coefficients]{Dispersion for 1-D Schr\"odinger and wave equations with BV coefficients}
\author[C. N. Beli, L. I. Ignat and E.  Zuazua]
{Constantin N. Beli, Liviu I. Ignat \and Enrique Zuazua}
\address{N. Beli
\hfill\break\indent Institute of Mathematics ``Simion Stoilow'' of the Romanian Academy\\21 Calea Grivitei Street \\010702 Bucharest \\ Romania }
\address{L. I. Ignat
\hfill\break\indent Institute of Mathematics ``Simion Stoilow'' of the Romanian Academy\\
\hfill\break\indent  21 Calea Grivitei Street \\010702 Bucharest \\ Romania 
\hfill\break\indent \and
\hfill\break\indent Faculty of Mathematics and Computer Science, University of Bucharest \\
\hfill\break\indent 14 Academiei Street, 010014 Bucharest, Romania.
}
 \email{{\tt
liviu.ignat@gmail.com}\hfill\break\indent  {\it Web page: }{\tt
http://www.imar.ro/\~\,lignat}}
\address{E. Zuazua
\hfill\break\indent  BCAM - Basque Center for Applied Mathematics\\
Alameda de Mazarredo 14. 
48009 Bilbao, Basque Country, Spain.
\hfill\break\indent \and
\hfill\break\indent  Ikerbasque, Basque Foundation for Science\\
Maria Diaz de Haro 3, 48013, Bilbao, Basque Country, Spain.
 }
  \email{{\tt zuazua@bcamath.org }
\hfill\break\indent {\it Web page: }{\tt http://www.bcamath.org/zuazua/}}
\keywords{Schr\"odinger equation, wave equation, one space dimension, $BV$ coefficients, dispersion and Strichartz estimates, almost periodic functions}
\begin{document}


\begin{abstract}In this paper we analyze the dispersion for one dimensional wave and Schr\"odinger equations with BV coefficients. In the case of the wave equation we give a complete answer in terms of the variation of the logarithm of the 
coefficient showing that dispersion occurs if this variation is small enough but it may fail when the variation goes beyond a sharp threshold. For the Schr\"odigner equation we prove that the dispersion holds under the same smallness assumption on the variation of the coefficient. But, whether dispersion may fail for larger coefficients is unknown for the Schr\"odinger equation.
\end{abstract}

\maketitle

\section{Introduction}

In this paper we consider the following two equations with variable coefficients: The one-dimensional wave equation
\begin{equation}\label{wave}
\left\{
\begin{array}{ll}
v_{tt}(t,x)-\partial_x(a(x)\partial_x v)(t,x)=0,& (t,x)\in \rr^2,\\[10pt]
v(0,x)=v_0(x),\ v_t(0,x)=0, & x\in \rr,
\end{array}
\right.
\end{equation}
and the Schr\"odinger equation 
\begin{equation}\label{sch}
\left\{
\begin{array}{ll}
iu_t(t,x)+\partial_x (a(x)\partial _xu)(t,x)=0,& (t,x)\in \rr^2,\\[10pt]
u(0,x)=u_0(x), & x\in \rr.
\end{array}
\right.
\end{equation}

Along the paper we will consider nonnegative functions $a$ with bounded variation and satisfying the following lower and upper bounds
\begin{equation}\label{cond.a}
0<m\leq a(x)\leq M, \quad x\in \rr.
\end{equation}

The main positive results of this paper are as follows.

\begin{theorem}\label{disp.wave}
For any  $a\in BV(\rr)$ satisfying \eqref{cond.a} and $\var(\log(a))<2 \pi$
there exists a positive constant $C(\var(a),m,M)$ 
such that the solution $v$ of \eqref{wave} satisfies
   \begin{equation}\label{dispersie.wave}
\sup _{x\in \rr} \int _{\rr} |v(t,x)|dt\leq {C(\var(a),m,M)}\|v_0\|_{L^1(\rr)}.
\end{equation}
\end{theorem}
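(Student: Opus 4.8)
The plan is to reduce the dispersive estimate \eqref{dispersie.wave} to an estimate on the free wave equation after a change of spatial variable, and then to control the error terms generated by the variable coefficient via the smallness of $\var(\log a)$. First I would introduce the Liouville-type change of variables $y=\int_0^x a(s)^{-1/2}\,ds$, which is a bi-Lipschitz homeomorphism of $\rr$ by \eqref{cond.a}, and write $v(t,x)=b(x)\,w(t,y(x))$ for a suitable weight $b$ chosen so that the principal part $\partial_x(a\,\partial_x\cdot)$ is conjugated to $\partial_y^2$. In the classical smooth setting this turns \eqref{wave} into $w_{tt}-w_{yy}+q(y)w=0$ with a potential $q$ built out of $a'$ and $a''$; when $a$ is only BV, $q$ is a distribution (a measure plus worse), so the right framework is to keep the equation in a transmission/interface form. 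Concretely, I would represent $v$ via the d'Alembert formula for the conjugated constant-coefficient operator together with the reflection and transmission coefficients generated at each jump of $a$: at a jump from $a_-$ to $a_+$ the transmission coefficient has modulus $\sqrt{a_-/a_+}$-type weight and the reflection coefficient is $r=(\sqrt{a_+}-\sqrt{a_-})/(\sqrt{a_+}+\sqrt{a_-})$, whose size is comparable to the local jump of $\log a$.

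The core of the argument is then a resolvent or multiple-scattering expansion. I would expand the solution operator as a series over all finite sequences of reflection events; the $n$-th term corresponds to a wave that has been reflected $n$ times. Because the $L^1_t L^\infty_x$ norm of each free transmitted wave segment is controlled by $\|v_0\|_{L^1}$ (this is the one-dimensional $L^1\to L^\infty_x L^1_t$ smoothing for $\partial_t^2-\partial_y^2$, which is elementary: $w(t,y)=\tfrac12(w_0(y-t)+w_0(y+t))$ so $\int_\rr|w(t,y)|\,dt\le\|w_0\|_{L^1}$), the $n$-th term is bounded by $\|v_0\|_{L^1}$ times a product of $n$ reflection factors. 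Summing over the combinatorics of reflection sequences, the geometric series converges precisely when $\sum|r_k|$, or more sharply a quantity comparable to $\var(\log a)$, is below the threshold; tracking constants carefully should produce the sharp value $2\pi$ (the factor $2\pi$ arising because the natural summable quantity is $2\arctanh$ of half-jumps or, after passing to the continuous part of the variation, $\tfrac12\var(\log a)$ against a $\pi$-type constant from the reflection kernel). For the continuous (Cantor and absolutely continuous) part of $\var(\log a)$ I would first prove the estimate for piecewise-constant $a$ with the constant depending only on $\var(\log a)$, $m$, $M$, and then pass to general BV $a$ by approximation, using that the solution operator depends continuously (in the relevant weak topology) on $a$ in BV$^*$ and that the bound is uniform along the approximating sequence.

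The main obstacle I anticipate is making the multiple-scattering expansion rigorous when $a$ has a nontrivial singular continuous part: the ``reflection at a jump'' picture must be replaced by a continuous accumulation of infinitesimal reflections, i.e. by a Volterra-type integral equation (a Riccati equation for a reflection coefficient $R(y)$ along the characteristic, driven by $d(\log a)$), and one must show its solution exists and stays bounded — with the relevant norm controlled by $\var(\log a)<2\pi$ — and that it still yields the $L^1_tL^\infty_x$ bound after integrating in $t$. A secondary technical point is the interchange of the $t$-integral with the scattering series, which requires either absolute convergence in $L^1_tL^\infty_x$ (provided by the smallness) or a Fatou/monotone-convergence argument on the partial sums. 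Once the series is shown to converge in $L^1_tL^\infty_x$ with the stated bound, \eqref{dispersie.wave} follows, and the dependence of $C$ only on $\var(a)$ (equivalently, through $m,M$, on $\var(\log a)$), $m$ and $M$ is transparent from the construction.
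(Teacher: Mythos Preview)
Your multiple-scattering picture is morally the same object the paper studies: after the spectral representation, the reflection series you describe is encoded in the recursively defined function
\[
Q_k(i\omega)=e^{ic_{k-1}\omega}\,\frac{-d_{k-1}+Q_{k-1}(i\omega)}{1-d_{k-1}Q_{k-1}(i\omega)},\qquad d_{k-1}=\frac{b_{k-1}-b_k}{b_{k-1}+b_k},
\]
and the $L^1_tL^\infty_x$ estimate reduces exactly to bounding $\|Q_n\|_{AP}$, i.e.\ the sum of the absolute values of all scattering amplitudes. So the reduction step of your outline is fine, and the paper carries it out via the resolvent rather than a Liouville change of variables, which is a cosmetic difference.

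The genuine gap is the sentence ``summing over the combinatorics of reflection sequences, the geometric series converges precisely when $\sum|r_k|$ \ldots\ is below the threshold; tracking constants carefully should produce the sharp value $2\pi$.'' This is the whole problem, and it is \emph{not} a geometric series. If you expand $Q_n$ and bound term by term, the crude estimate you get is of the type $\|Q_n\|_{AP}\le \sinh x/(2-\cosh x)$ with $x=\sum_k\arctanh|d_k|=\tfrac14\var(\log a)$, which only gives a threshold $\var(\log a)<4\log(2+\sqrt3)\approx 5.27$, well short of $2\pi$. Nothing in your description explains how to go beyond this. The paper's actual work is to show, by a separate and nontrivial argument, that
\[
\sup\Big\{\|Q_n\|_{AP}:\ \sum_{k}\arctanh|d_k|=x\Big\}=\tan x,
\]
so that finiteness holds exactly for $x<\pi/2$, i.e.\ $\var(\log a)<2\pi$. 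This is obtained by introducing $f_r(x)=\sup_{t\in A_x}\|R(t)^r\|$, proving the differential recursion $f_r'=r(f_{r-1}+f_{r+1})$, and solving it via the generating function $g(x,y)=\sum_{r\ge1}f_r(x)y^{r-1}=\tan x/(1-y\tan x)$. No amount of ``tracking constants'' in a direct scattering expansion recovers this; the $\pi/2$ comes from the blow-up of $\tan$, not from any reflection-kernel constant.

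A secondary point: your plan to handle the singular continuous part of $d(\log a)$ by a Riccati/Volterra equation is unnecessary once you have the uniform piecewise-constant bound depending only on $\var(\log a)$, $m$, $M$; approximation then suffices, as you note. But that uniform bound is precisely what is missing above.
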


\begin{theorem}\label{disp.sch}
For any  $a\in BV(\rr)$ satisfying \eqref{cond.a} and $\var(\log(a))<2 \pi$
there exists a positive constant $C(Var(a),m,M)$ such that the solution $u$ of \eqref{sch} satisfies 
   \begin{equation}\label{dispersie}
\|u(t)\|_{L^\infty(\rr)}\leq \frac {C(\var(a),m,M)}{\sqrt{t}}\|u_0\|_{L^1(\rr)}.
\end{equation}
\end{theorem}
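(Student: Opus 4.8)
The plan is to go through one–dimensional scattering theory: diagonalize the self–adjoint nonnegative operator $A=-\partial_x(a\partial_x)$ (so that \eqref{sch} reads $u(t)=e^{-itA}u_0$ with spectrum $[0,\infty)$) by a distorted Fourier transform built from Jost solutions, read off from it a uniform resolvent bound, and then extract the $t^{-1/2}$ decay from the resulting oscillatory integral by stationary phase. The same resolvent bound is, presumably, the engine behind Theorem~\ref{disp.wave} as well. First I would put the generalized eigenvalue equation $(a\phi')'+\lambda^2\phi=0$ into normal form. Since $\log a\in BV(\rr)$ it has limits $\log a_\pm$ at $\pm\infty$, and the change of variables $y=\int_0^x a(s)^{-1/2}\,ds$ is bi-Lipschitz by \eqref{cond.a}; it turns the equation into a first order $2\times2$ system for $(\phi,\sqrt a\,\phi')$ — both functions continuous, so no regularity is lost at jumps of $a$ — whose only ``potential'' is the finite measure $d(\log a)$, of total mass $\var(\log a)$. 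Writing this system in the WKB basis $e^{\pm i\lambda y}$ and integrating from $\pm\infty$ gives Volterra equations for the Jost solutions $f_\pm(\cdot,\lambda)$, normalized to behave like $a_\pm^{\mp1/4}e^{\pm i\lambda y}$ at $\pm\infty$, whose integral kernels have operator norm controlled by $\var(\log a)$. Solving them for every $\lambda\neq0$ yields uniform bounds $\sup_{x\in\rr,\ \lambda\in\rr}|f_\pm(x,\lambda)|\le C(\var(\log a),m,M)$ and analogous bounds for the transmission and reflection coefficients $T(\lambda),R_\pm(\lambda)$ defined by the connection formula between $f_+$ and $f_-$.

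The key step is the uniform lower bound $\inf_{\lambda\in\rr}|T(\lambda)|\ge c(\var(\log a),m,M)>0$. Since $A\ge0$ and $a\to a_\pm$ forces every solution of $(a\phi')'+\lambda^2\phi=0$ to be oscillatory — hence non-$L^2$ — at $\pm\infty$, there are no embedded eigenvalues, so $T(\lambda)\neq0$ for $\lambda\neq0$; at $\lambda=0$ the bounded solution $\phi\equiv1$ of $(a\phi')'=0$ shows that $0$ is a resonant (regular) point with $T(0)\neq0$; and $|T(\lambda)|\to1$ as $\lambda\to\pm\infty$. To turn this into a lower bound with a constant depending only on $\var(\log a),m,M$ I would resum the multiple–reflection (Born) series for $1/T(\lambda)$, or equivalently run a fixed point argument for the Riccati equation satisfied by the reflection coefficient along the characteristics: the relevant contraction/summability constant is $<1$ precisely when $\var(\log a)<2\pi$, which is exactly the hypothesis. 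Beyond this threshold coherently arranged oscillations of $a$ can push $|T(\lambda)|$ arbitrarily close to $0$ — and, for the wave equation, destroy dispersion, which is the sharpness analyzed later in the paper.

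Granting Steps~1--2, the Green kernel of $A$ at energy $\lambda^2$ is $R_A(\lambda^2\pm i0)(x,x')=f_+(x_{\max},\lambda)f_-(x_{\min},\lambda)/W(\lambda)$ with Wronskian $|W(\lambda)|\sim|\lambda|\,|T(\lambda)|^{-1}$, hence $\sup_{x,x'\in\rr}|R_A(\lambda^2+i0)(x,x')|\le C(\var(\log a),m,M)/|\lambda|$, together with one extra degree of $\lambda$-regularity away from $\lambda=0$ obtained by differentiating the Volterra equations once. Feeding the associated distorted Fourier transform into $e^{-itA}$, the propagator kernel becomes $K_t(x,x')=\int_\rr e^{-it\lambda^2}\,m(x,x',\lambda)\,d\lambda$, where, after substituting $\mu=\lambda^2$ and using the symmetry $\psi_\pm(x,-\lambda)=\overline{\psi_\pm(x,\lambda)}$ of the generalized eigenfunctions, the amplitude is, uniformly in $x,x'$, of the form $|\mu|^{-1/2}$ times a bounded function with bounded $\mu$-derivative. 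Van der Corput's lemma — equivalently, the fact that the Fourier transform of $|\mu|^{-1/2}\mathbf{1}_{\{\mu>0\}}$ is $c\,|t|^{-1/2}$ — then gives $\sup_{x,x'}|K_t(x,x')|\le C(\var(\log a),m,M)\,t^{-1/2}$, which is \eqref{dispersie} after pairing with $u_0\in L^1(\rr)$.

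I expect two points to be the real work. The first, and the heart of the matter, is the uniform-in-$(x,\lambda)$ control of the Jost solutions and above all the lower bound on $|T(\lambda)|$ with a constant depending only on $\var(\log a),m,M$ under the sharp condition $\var(\log a)<2\pi$; this is where all the structure of the problem is used, and it is also where the $2\pi$ has to be shown to be the exact borderline. The second is the single $\lambda$-derivative needed for the $t^{-1/2}$ rate: because $d(\log a)$ is only a finite measure, with no decay or weight, naively differentiating the Volterra equations in $\lambda$ produces a factor (essentially $y$) that is not integrable against $d(\log a)$, so the estimate has to be reorganized — for instance by integrating by parts in $y$ to trade that factor for an extra power of $1/\lambda$ — so that only $\var(\log a)$, $m$ and $M$ enter the final constant.
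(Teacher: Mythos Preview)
Your outline is a genuinely different route from the paper's, and it contains a real gap that I do not see how to close without essentially reverting to the paper's mechanism.

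\textbf{What the paper actually does.} For piecewise constant $a$ the resolvent is computed explicitly; the spectral formula \eqref{spectral.sch} then gives, for $x\in I_k$,
\[
u(t,x)=\frac1{i\pi}\int_\rr e^{-it\omega^2}\,i\omega c_{2k-1}(i\omega)\,e^{i\omega b_kx}\,d\omega+\text{(similar terms)}.
\]
No stationary phase is performed on the full amplitude. Instead one uses only that $(e^{-it\omega^2})^\vee$ is bounded by $C/\sqrt t$ and Young's inequality, so that
\[
|u(t,x)|\le \frac{C}{\sqrt t}\int_\rr\bigl(|(i\omega c_{2k-1}(i\omega))^\vee|+|(i\omega c_{2k}(i\omega))^\vee|\bigr)\,dx+\frac{C}{\sqrt t}\|u_0\|_{L^1}.
\]
The whole content of the theorem is thus the \emph{wave} estimate \eqref{main.ineq}, which in turn reduces to bounding the almost--periodic norm $\|Q_n\|_{AP}$ of the reflection coefficient. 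Lemma~\ref{imp} gives $\|Q_n\|_{AP}\le\tan(\var(\log a)/4)$, and this is precisely where the threshold $\var(\log a)<2\pi$ enters.

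\textbf{Where your argument breaks.} Your ``key step'' is a uniform lower bound on $|T(\lambda)|$. But that bound holds for \emph{every} BV coefficient, with no smallness: in the paper's notation $|Q_k(i\omega)|<1$ for all real $\omega$ (this is already in \cite{MR2049025}), hence $|1/T|$ is bounded above independently of $\var(\log a)$. The $2\pi$ condition is not needed for, and cannot be extracted from, that step. What actually requires $\var(\log a)<2\pi$ is the $L^1$--Fourier--multiplier (equivalently $AP$) norm of the reflection coefficient, a strictly stronger quantity than $\sup_\lambda|R_\pm(\lambda)|$.

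The second difficulty you flag is fatal for the van der Corput route as stated. For a laminar medium with interfaces at $x_1,\dots,x_{n-1}$, the Jost solutions and $T,R_\pm$ are almost periodic in $\lambda$ with frequencies $b_k x_k$, $b_k(x_k-x_{k-1})$, etc. Differentiating in $\lambda$ therefore produces factors of size $\max_k|x_k|$, which is \emph{not} controlled by $\var(\log a),m,M$; your proposed integration by parts in $y$ trades this for $1/\lambda$, but the resulting constant still carries the spatial spread of $d(\log a)$ (and creates a low-frequency problem). Consequently your amplitude $m(x,x',\lambda)$ is bounded, but its $\lambda$--derivative is not bounded uniformly in the class of coefficients with fixed $\var(\log a),m,M$, and van der Corput does not yield \eqref{dispersie} with the claimed constant.

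The paper's device avoids this entirely: writing $Q_n(i\omega)=\sum_j c_je^{i\alpha_j\omega}$, each term $e^{-it\omega^2+i\alpha_j\omega}$ contributes $O(t^{-1/2})$ \emph{independently of} $\alpha_j$, so one sums $|c_j|$ and lands on $\|Q_n\|_{AP}$. That is why the $AP$ norm, and hence Lemma~\ref{imp} with its $\tan(\cdot)$ bound, is the correct object; a pointwise resolvent bound together with one $\lambda$--derivative is not.
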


In the case of the wave equation a counterexample can also be established when the total variation of the logarithm of the coefficient is large, showing that our dispersion result above is sharp.

\begin{theorem}\label{no-disp}Let be $0<m<M$ and $\alpha\geq 2\pi$.
For any positive number $N$ large enough
there exists a piecewise constant function, $m \leq a\leq M$, with $\var(\log (a))=\alpha$ such that for some $v_0\in L^1(\rr)$ the solution $v$ of problem \eqref{wave} satisfies
\begin{equation}\label{raport.N}
\sup _{x\in \rr}  \int _{\rr} |v(t,x)|dt\geq N \|v_0\|_{L^1(\rr)}.
\end{equation}
\end{theorem}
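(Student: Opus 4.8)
The plan is to make \eqref{dispersie.wave} fail by choosing $a$ so that the fundamental solution of \eqref{wave}, expanded as a multiple–reflection (geometric optics) series, carries through a fixed point an $\ell^1$–mass of wavefronts as large as we please. \emph{Reduction to a scattering sum.} For piecewise constant $a$ with interfaces $p_1<\dots<p_K$, the solution of \eqref{wave} with $v_t(0)=0$ is an absolutely convergent superposition $v(t,x)=\tfrac12\sum_\gamma w_\gamma\, v_0(\phi_\gamma(t,x))$, where $\gamma$ ranges over the scattering paths (words of transmissions and reflections at the $p_i$), $\phi_\gamma$ is affine and possibly orientation–reversing — the temporal profile of every reflected/transmitted packet being a copy of that of $v_0$ — and $w_\gamma$ is the product along $\gamma$ of the interface coefficients $R_i=-\tanh\!\big(\tfrac14\Delta_i\log a\big)$ and $T_i=1+R_i$, $\Delta_i$ the jump of $\log a$ across $p_i$. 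Put the observation point $x_0$ just to the left of all interfaces; then only the paths that enter the medium from the left and leave it again to the left reach $x_0$, each at speed $\sqrt{a(x_0)}\le\sqrt M$. Choosing the inter–interface distances generically (so these crossing times are pairwise distinct) and $v_0$ a narrow nonnegative bump, no cancellation occurs and
\[
\int_{\rr}|v(t,x_0)|\,dt\ \ge\ \frac{1}{2\sqrt M}\Big(\sum_{\gamma\ \text{exit left}}|w_\gamma|\Big)\|v_0\|_{L^1(\rr)} .
\]
Hence it is enough to build, for each large $N$, a coefficient with $\var(\log a)=\alpha$ and an $x_0$ for which $S:=\sum_{\gamma\ \text{exit left}}|w_\gamma|\ge 2\sqrt M\,N$.

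\emph{The coefficient and the transfer recursion.} To fix ideas suppose first $M/m\ge e^{\alpha}$ and take $a$ a monotone staircase with $n$ interfaces, each a downward jump of $\log a$ by $\delta=\alpha/n$, so that $\var(\log a)=\alpha$ and $|R_i|=r:=\tanh(\delta/4)$ at every interface. Let $L_j$ (resp.\ $\Phi_j$) be the sum of $|w_\gamma|$ over paths starting in the $j$-th gap moving left (resp.\ right) that eventually exit to the left; then $L_0=1$, $L_j=(1-r)L_{j-1}+r\Phi_j$, $\Phi_j=rL_j+(1+r)\Phi_{j+1}$, $\Phi_n=0$, and $S=\Phi_0$. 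Eliminating the intermediate quantities, one step of the recursion is a fixed matrix with complex eigenvalues of argument $\theta$, $\sin\theta=r$, and the two–point problem solves to
\[
S=\frac{\sin(n\theta)}{\cos\big((n-1)\theta\big)},\qquad \theta=\arcsin\!\big(\tanh(\alpha/4n)\big).
\]
For arbitrary $0<m<M$ one replaces this by an $a$ oscillating between two values $a_-$ and $a_+=a_-e^{\delta}$ inside $[m,M]$ with $2n$ interfaces and $\delta=\alpha/2n$ (admissible once $n$ is large enough); the same computation applied to a two–interface period gives $S=\sin(n\theta)/\cos\!\big((n-\tfrac12)\theta\big)$ with $\theta=2\arcsin(\tanh(\alpha/8n))$. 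In both cases $\var(\log a)=\alpha$ exactly.

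\emph{Where $2\pi$ enters.} Because $\arcsin(\tanh x)<x$ one has $n\theta<\alpha/4$, with $n\theta\to\alpha/4$ as $n\to\infty$. If $\alpha<2\pi$, the argument of the cosine stays below $\pi/2$, so it is $\ge\cos(\alpha/4)>0$ and $S\le 1/\cos(\alpha/4)$ uniformly in $n$ — in agreement with Theorem~\ref{disp.wave}. If $\alpha\ge2\pi$, that argument increases to $\pi/2$ from below, so for a suitable large $n$ it is as close to $\pi/2$ as we wish while $\sin(n\theta)$ stays bounded below; thus $S$ can be made $\ge2\sqrt M\,N$, and the estimate above yields \eqref{raport.N}.

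\emph{The main difficulty.} The algebra of the recursion is what produces the precise threshold $2\pi$, but the genuinely delicate point is the first step: turning the combinatorial weight $S=\sum|w_\gamma|$ into a true lower bound for $\int_{\rr}|v(t,x_0)|\,dt$, which requires ruling out destructive interference among the infinitely many scattered copies of $v_0$ arriving at $x_0$. This is handled by choosing the interface spacings so that the finitely many ``early'' arrival times are distinct, taking $v_0$ supported in a time window shorter than the least gap between them, and using the geometric decay of $|w_\gamma|$ (valid since $S<\infty$ for the chosen $n$) to neglect the tail of late arrivals; the constraints $m\le a\le M$ and $\var(\log a)=\alpha$ are what force the small contrast $\delta=\alpha/2n$, hence the large number of interfaces.
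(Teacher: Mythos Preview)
Your approach is genuinely different from the paper's. The paper works on the spectral side: the resolvent formula \eqref{spectral.wave} together with the explicit identity \eqref{c.2n} reduces \eqref{raport.N} to showing that $\|Q_n\|_{AP}$ can be made arbitrarily large, and Lemma~\ref{negative} is then proved by a multivariable-series analysis of the recursion \eqref{def.q} culminating in the identity $f_1(x):=\sup_{t\in A_x}\|R(t)\|=\tan x$ on $(0,\pi/2)$. You instead stay on the physical side, expand the solution over scattering paths, and for the equal-jump coefficient obtain the closed form $S=\sin(n\theta)/\cos((n-1)\theta)$. Your $S$ is exactly the paper's $\|R(t)\|$ for the equal partition $t=(\alpha/(4n),\dots,\alpha/(4n))\in A_{\alpha/4}$, and in the limit $n\to\infty$ your formula recovers $\tan(\alpha/4)$, so the two routes meet at the same threshold.

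There is, however, a genuine gap when $\alpha>2\pi$. Your assertion that the cosine argument ``increases to $\pi/2$ from below'' is correct only at $\alpha=2\pi$. For $\alpha>2\pi$ the sequence $(n-1)\theta(n)$ (respectively $(n-\tfrac12)\theta(n)$ in the oscillating case) increases with $n$ to $\alpha/4>\pi/2$, so it \emph{crosses} $\pi/2$ at some finite $n^*$. For the integers $n<n^*$ your formula gives a finite $S$, but these finitely many values are bounded by a constant depending only on $\alpha$: for example at $\alpha=4\pi$ one checks $(n-1)\theta(n)<\pi/2$ only for $n\in\{1,2\}$, yielding $S\le 2\tanh(\pi/2)<2$. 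For $n\ge n^*$ the scattering series $\sum_\gamma|w_\gamma|$ actually diverges, so your expansion is no longer absolutely convergent and the lower bound on $\int|v|\,dt$ can no longer be read off from it. Hence the equal-jump family cannot produce arbitrarily large finite $S$ once $\alpha>2\pi$. The repair is easy but must be made explicit: concentrate variation just under $2\pi$ in many small equal jumps (so your own formula gives $S_0$ as large as desired), then append further interfaces carrying the remaining $\alpha-2\pi$ while keeping $a\in[m,M]$; since appending interfaces only enlarges the set of left-exiting paths, $S\ge S_0$. The paper sidesteps this issue because it never restricts to equal partitions: monotonicity of $f_1$ (Lemma~\ref{f.crescatoare}) gives $f_1(\alpha/4)\ge f_1(\pi/2)=\infty$ immediately.
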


Such a counterexample is not available for the Schr\"odinger equation. Thus, whether the  dispersion result in Theorem \ref{disp.sch} is sharp  is an open problem.

Our results are given in terms of the total variation of  function $\log(a)$. However under the boundedness assumption above \eqref{cond.a}, $\var(a)$ and $\var(\log a)$ are comparable.

%
The main  ideas of the proofs of the above results  come from the analysis of wave propagation in multi-layer structures \cite[Ch. 3]{MR2327824} and \cite{MR2049025}. The proof follows mainly the ideas in \cite{MR2049025} but with  finer resolvent estimates.

We recall that, once the dispersion is established for the solutions of the linear Schr\"odinger equation, more general space-time estimates can be obtained, namely,  the so-called Strichartz estimates
$$\|u\|_{L^q(\rr,L^r(\rr))}\leq C(q,r)\|\varphi\|_{L^2(\rr)},$$ 
for some admissible pairs $(q,r)$.  Strichartz estimates for BV coefficients in $1$-d without smallness conditions have been established in 
\cite{MR2227135} without making use of the dispersion property. This paper is devoted to investigate under which assumptions the dispersion property still holds. 

Estimates similar to these in Theorem \ref{disp.wave} but integrating on the space variable $x$ instead of time, have been obtained in \cite{MR2794904} under a smallness assumption on the BV-norm of  $\log(a)$. The methods developed in this paper could very likely be useful to further analyze the problems addressed in \cite{MR2794904}. But this is still to be done.

The paper is organized as follows. In section \ref{resolvent} we present some preliminary results from \cite{MR2049025} and state  two technical lemmas that allow us to improve the results in \cite{MR2049025}. In section \ref{main.section} we prove the main results stated in the introduction. We point out that  the proof of Theorem \ref{disp.sch}  uses previous results from the proof of  Theorem \ref{disp.wave}.
 Section   \ref{proof.lemmas} contains the proofs of the two technical lemmas. We will obtain  estimates on some almost periodic functions by using some tools from analytical number theory.
\section{Resolvent estimates  on a laminar media}\label{resolvent}

In this section we 
collect some previous results  from \cite{MR2049025}, keeping the same notations. 
Let us consider a partition of the real axis 
\begin{equation}\label{partition}
-\infty=x_0<x_1<x_2<\cdots<x_{n-1}<x_n=\infty
\end{equation}
and a step function 
\begin{equation}\label{function.a}
a(x)=b_k^{-2},\quad x\in I_k =(x_{k-1},x_k), \, k=1,\ldots, n,
\end{equation}
where $1/M^2\leq b_k\leq 1/m^2$.

Let us now consider the self-adjoint operator $A=-\partial_xa(x)\partial _x$ defined from $\{h\in H^1(\rr), a\partial _x h\in L^2(\rr)\}$ to $L^2(\rr)$. For $\omega\geq 0$ let us consider $R_\omega$ its resolvent:
$$R_\omega g=(-\partial _x a(x)\partial _x+\omega^2 I)^{-1}g.$$
It follows that for $x\in I_k=(x_{k-1},x_k)$, $k=1,\ldots, n$, we have
\begin{equation}\label{res}
R_\omega g(x)=c_{2k-1}(\omega)e^{\omega b_kx}+c_{2k}(\omega)e^{-\omega b_k x}+b_k\int _{I_k} \frac{g(y)}{2\omega}e^{-\omega b_k|x-y|}dy,
\end{equation}
where $c_2=c_{2n-1}=0$ and the other coefficients are determined by solving the system obtained from the continuity of $R_\omega g$ and $a(x)\partial _x R_\omega g$ at the points $x_k$, $k=1,\ldots, n-1$. It follows that
$$D_n(\omega) C =T$$
where $C=[c_1,c_3,c_4,\ldots,c_{2n-3},c_{2n-2},c_{2n}]^{\rm T}$, $T=(t_1,\ldots, t_{n-1})^{\rm T}$, $t_k=(t_{k1},t_{k2})^{\rm T}$,

$$t_k=\left(\begin{array}{c}
\displaystyle -b_k\int _{I_k} \frac{g(y)}{2\omega}e^{-\omega b_k(x_k-y)}dy + b_{k+1}\int _{I_{k+1}} \frac{g(y)}{2\omega}e^{-\omega b_{k+1}(y-x_k)}dy  \\[10pt]
\displaystyle b_kb_{k+1} \Big(\int _{I_k} \frac{g(y)}{2\omega}e^{-\omega b_k(x_k-y)}dy + \int _{I_{k+1}} \frac{g(y)}{2\omega}e^{-\omega b_{k+1}(y-x_k)}dy\Big)
\end{array}\right)
,\ k=1,\ldots,n-1,$$
and $$D_n=\left(\begin{array}{cccccccc}{\bf a}_1 & B_1 & 0 & 0 & 0 & 0 & 0 & 0 \\0 & A_2 & B_2 & 0 & 0 & 0 & 0 & 0 \\0 & 0 & A_3 & B_3 & 0 & 0 & 0 & 0 \\- & - & - & - & - & - & - & - \\0 & 0 & 0 & 0 & A_{n-3} & B_{n-3} & 0 & 0 \\0 & 0 & 0 & 0 & 0 & A_{n-2} & B_{n-2} & 0 \\0 & 0 & 0 & 0 & 0 & 0 & A_{n-1} &{\bf b}_{n-1}\end{array}\right)
$$
with 
$${\bf a}_1=
\left(\begin{array}{c} 
e^{\omega b_1x_1} \\
b_2 e^{\omega b_1x_1} 
\end{array}\right), \quad 
{\bf b}_{n-1}=\left(\begin{array}{c} 
-e^{-\omega b_nx_{n-1}} \\
b_{n-1} e^{-\omega b_nx_{n-1}} 
\end{array}\right),
$$
$$A_k=\left(\begin{array}{cc}
e^{\omega b_kx_k} & e^{-\omega b_kx_k}  \\
b_{k+1}e^{\omega b_kx_k}  & -b_{k+1}e^{-\omega b_{k}x_k}
\end{array}\right)
,\ B_k=\left(\begin{array}{cc}
-e^{\omega b_{k+1}x_k} & -e^{-\omega b_{k+1}x_k}  \\
-b_{k}e^{\omega b_{k+1}x_k}  & b_{k}e^{-\omega b_{k+1}x_k}
\end{array}\right).
$$
For technical reasons  we introduce the matrix $\tilde D_n$ which has the same structure as $D_n$ but replacing vector ${\bf b}_{n-1}$ with 
$$\tilde {\bf b}_{n-1}=\left(\begin{array}{c} 
-e^{\omega b_nx_{n-1}} \\
-b_{n-1} e^{\omega b_nx_{n-1}} 
\end{array}\right).
$$
We point out that the vectors ${\bf b}_{n-1}, \tilde {\bf b}_{n-1} $  appearing in $D_n$ and $\tilde D_n$ are given by the second and respectively first column of $B_{n-1}$.


Let us introduce now the reflection coefficients 
\begin{equation}\label{reflex}
d_{k-1}=\frac{b_{k-1}-b_k}{b_{k-1}+b_k}, \, k=2,\ldots,n
\end{equation}
and the functions $Q_k$, $k=1,\ldots,n$, defined as follows: $Q_1(\omega)\equiv 0$ and 
\begin{equation}\label{Q}
Q_k(\omega)=
\left\{
\begin{array}{ll}
\displaystyle e^{-2\omega b_k(x_k-x_{k-1})} \frac{-d_{k-1}+Q_{k-1}(\omega)}{1-d_{k-1}Q_{k-1}(\omega)}, &k=2,\ldots,n-1,\\[12pt]
\displaystyle e^{2\omega b_kx_{k-1}} \frac{-d_{k-1}+Q_{k-1}(\omega)}{1-d_{k-1}Q_{k-1}(\omega)}, &k=n.
\end{array}
\right.
\end{equation}
It follows that for $2\leq k\leq n$ 
$$Q_k(\omega)=
\left\{
\begin{array}{ll}
\displaystyle e^{-2\omega b_k x_k}\frac {\det \tilde D_k(\omega)}{\det D_k(\omega)},& k=2,\ldots,n-1,\\[12pt]
\displaystyle \frac {\det \tilde D_k(\omega)}{\det D_k(\omega)},& k=n,
\end{array}
\right. 
$$ and for any $2\leq k\leq n$
\begin{equation}\label{Dn}
\det D_k(\omega)=\prod _{j=1}^{k-1} (b_{j}+b_{j+1})e^{\omega (b_j-b_{j+1})x_j}(1-d_jQ_j(\omega)).
\end{equation}
It has been proved in \cite{MR2049025} that there exists a $\delta>0$ such that for any $\omega\in \cn$ with $\Re(\omega)>-\delta$ we have $|Q_k(\omega)|<1$, $k=2,\ldots, n$. It implies that $(\det D_n(\omega) )^{-1}$ is uniformly bounded in the same region of the complex plane and moreover $\omega R_\omega u_0$  can be analytically continued. In the case when the coefficient $a$ is as in \eqref{function.a}  the spectral calculus gives us the following representation of the solutions  of equations  \eqref{wave} and \eqref{sch}.
\begin{lemma}\label{spectral.wave0}
The solution of the wave equation \eqref{wave} verifies
\begin{equation}\label{spectral.wave}
v(t,x){\bf 1}_{\{t>0\}}=\frac {i}{2\pi}\int _{-\infty}^{\infty} e^{it\omega } \omega (R_{i\omega }v_0)(x){ d\omega}.
\end{equation}
\end{lemma}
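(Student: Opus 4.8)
The plan is to deduce \eqref{spectral.wave} from the one-sided Laplace transform in $t$ together with the analytic continuation of $\omega\,R_\omega v_0$ recalled above. Write $L:=-\partial_x\big(a(x)\partial_x\,\cdot\,\big)$ for the nonnegative self-adjoint operator of Section~\ref{resolvent}, so that the solution of \eqref{wave} is $v(t)=\cos(t\sqrt L)\,v_0$ and hence $\|v(t)\|_{L^2(\rr)}\le\|v_0\|_{L^2(\rr)}$ for every $t\in\rr$. Consequently the Laplace transform $\hat v(s):=\int_0^\infty e^{-st}v(t)\,dt$ is well defined and holomorphic on $\{\Re s>0\}$. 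Taking the Laplace transform of the equation and using $v(0)=v_0$, $v_t(0)=0$, one gets $(L+s^2)\hat v(s)=s\,v_0$, i.e. $\hat v(s)=s\,R_sv_0$; by the result of \cite{MR2049025} recalled above, $s\mapsto s\,R_sv_0$ extends holomorphically to the half-plane $\{\Re s>-\delta\}$.

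Next I would invert this Laplace transform. For any $c>0$ the Bromwich inversion formula gives
\[
v(t)\,{\bf 1}_{\{t>0\}}=\frac1{2\pi i}\int_{\Re s=c}e^{st}\,s\,R_sv_0\,ds ,
\]
the cut-off ${\bf 1}_{\{t>0\}}$ being automatic: for $t<0$ one closes the contour in $\{\Re s>c\}$, where the integrand is holomorphic, and obtains $0$. For $v_0$ in the domain of $L$ one has $\|s\,R_sv_0\|_{L^2(\rr)}=O(1/|\Im s|)$ on every line $\{\Re s=\e\}$, $\e>0$, so the contour may be moved to $\{\Re s=\e\}$ for each such $\e$; using then the holomorphic extension of $s\,R_sv_0$ to $\{\Re s>-\delta\}$, together with a bound for $s\,R_sv_0$ near the imaginary axis uniform as $\e\downarrow0$, one passes to the limit $\e\downarrow0$ and obtains $v(t)\,{\bf 1}_{\{t>0\}}=\frac1{2\pi i}\int_{\Re s=0}e^{st}\,s\,R_sv_0\,ds$. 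The substitution $s=i\omega$, $ds=i\,d\omega$, and the identity $\frac{-1}{2\pi i}=\frac{i}{2\pi}$ rewrite the last integral as $\frac{i}{2\pi}\int_{-\infty}^{\infty}e^{it\omega}\,\omega\,R_{i\omega}v_0\,d\omega$, which is \eqref{spectral.wave}. (At the single point $t=0$ the symmetric improper integral returns $\tfrac12 v_0$ rather than $0$, which is immaterial for the sequel, where one integrates in $t$.)

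The only genuine difficulty, on which the whole argument hinges, is the bound for $s\,R_sv_0$ near $\{\Re s=0\}$ needed to let $\e\downarrow0$: there $s=i\omega$ with $\omega\in\rr$ and $R_s=R_{i\omega}=(L-\omega^2I)^{-1}$ is \emph{not} a bounded operator on $L^2(\rr)$, since $\omega^2$ belongs to the spectrum of $L$, so this bound cannot come from operator-norm resolvent estimates. It has to be obtained for a fixed, sufficiently regular datum---say $v_0\in C_c^\infty(\rr)$, the general case $v_0\in L^1(\rr)$ then following by density and the $L^\infty$ bounds established later in the paper---from the explicit formula \eqref{res} for $R_\omega g$ combined with the uniform estimate $|\det D_n(\omega)|^{-1}\le C$ on $\{\Re\omega>-\delta\}$; this gives, for instance, $\|\omega\,R_{i\omega}v_0\|_{L^\infty(\rr)}=O(|\omega|^{-1})$ as $|\omega|\to\infty$ along horizontal lines close to the imaginary axis, which makes the manipulations above rigorous. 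An alternative that avoids the Bromwich formalism is to use the spectral theorem directly: writing $v(t)=\int_0^\infty\cos(t\sqrt\lambda)\,dE_\lambda v_0$, inserting the elementary residue identity $\cos(t\sqrt\lambda)\,{\bf 1}_{\{t>0\}}=\frac{i}{2\pi}\int_{\rr-i\e}e^{it\omega}\,\omega\,(\lambda-\omega^2)^{-1}\,d\omega$ (valid for every $\lambda\ge0$ and $\e>0$; the factor ${\bf 1}_{\{t>0\}}$ appears because for $t<0$ one closes in the lower half-plane, which contains none of the poles $\pm\sqrt\lambda$), applying Fubini in $dE_\lambda v_0$, and letting $\e\downarrow0$ with the help of the same analyticity and boundedness input.
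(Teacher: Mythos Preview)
Your argument is essentially the paper's: the paper takes the time-Fourier transform of $v\,{\bf 1}_{\{t>0\}}$, uses holomorphy in $\{\Im z<0\}$ to obtain $\hat v_1(z)=iz\,R_{iz}v_0$, and then inverts along a line $\Im z=-\e$ before letting $\e\downarrow 0$ via the analytic extension of $\omega R_\omega$---exactly your Laplace/Bromwich computation in different vocabulary (with $s=iz$). You are more explicit than the paper about the decay and boundedness needed to justify moving the contour to $\Re s=0$, and your spectral-theorem alternative is a nice addition; otherwise the two proofs coincide.
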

\begin{lemma}\label{spectral.sch0}
The solution of the Schr\"odinger equation \eqref{sch} verifies
\begin{equation}\label{spectral.sch}
u(t,x)=\frac 1{ i \pi} \int _{-\infty}^{\infty} e^{-it\omega^2 } \omega (R_{i\omega }u_0)(x){d\omega}.
\end{equation}
\end{lemma}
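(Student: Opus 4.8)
The plan is to read \eqref{spectral.sch} off the spectral theorem for the operator $A:=-\partial_x(a\partial_x)$, which is self-adjoint and nonnegative on $L^2(\rr)$, combined with Stone's formula and the analytic continuation of the resolvent recalled just above. Since the equation rewrites as $iu_t=Au$, Stone's theorem gives $u(t)=e^{-itA}u_0=\int_{[0,\infty)}e^{-it\lambda}\,dE_\lambda u_0$, where $\{E_\lambda\}$ is the spectral resolution of $A$; the whole task is to turn this spectral integral into a contour integral whose integrand is the explicit quantity $\omega R_{i\omega}u_0$ appearing in \eqref{res}.

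The key steps would be the following. Insert Stone's formula, $dE_\lambda=\tfrac1{2\pi i}\big[(A-\lambda-i0)^{-1}-(A-\lambda+i0)^{-1}\big]\,d\lambda$, and change variables $\lambda=\omega^2$ with $\omega\ge0$, so that $d\lambda=2\omega\,d\omega$ (the Jacobian $2\omega$, equivalently the explicit factor $\omega$ in \eqref{spectral.sch}, also absorbing the $O(|\omega|^{-1})$ singularity of $R_\omega$ at $\omega=0$). For $\lambda>0$ the boundary values satisfy $(A-\lambda\mp i0)^{-1}g(x)=R_{\mp i\sqrt\lambda}g(x)$, where $z\mapsto R_zg(x)$ is understood through formula \eqref{res}: this formula, valid a priori for $\Re z>0$, extends to an analytic function of $z$ on $\{\Re z>-\delta\}$ thanks to the quoted fact that $|Q_k(z)|<1$ and $\det D_n(z)\ne0$ there, and the two one-sided limits in Stone's formula are exactly its values on the imaginary axis. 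Substituting back, and then using the reflection $\omega\mapsto-\omega$ to merge the two boundary-value terms into a single integral over $\rr$, leaves exactly \eqref{spectral.sch}, up to bookkeeping of the numerical constant. (The twin Lemma~\ref{spectral.wave0} comes out of the same computation with $e^{-it\lambda}$ replaced by $\cos(t\sqrt\lambda)$; the cutoff $\mathbf 1_{\{t>0\}}$ appears there because, for $t<0$, one closes the contour downward into the domain of analyticity $\{\Im\omega<\delta\}$ of $\omega R_{i\omega}v_0$, where $e^{it\omega}$ decays, so the integral vanishes. Alternatively, and perhaps closer to \cite{MR2049025}, one may Laplace transform in time --- $\widehat u(s)=-iR_zu_0$ with $z=\sqrt{-is}$, and $\widehat v(s)=sR_sv_0$ for the wave equation --- and then invert, the one-sidedness of the Laplace transform producing $\mathbf 1_{\{t>0\}}$.)

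The constant-chasing, the interchange of the $L^2$ pairing with the $\lambda$-integral, and the passage from a dense class of data to general $u_0$ are routine; the real obstacle I expect is the convergence of the resulting integral and the legitimacy of the contour deformations. On the real axis one has $|e^{-it\omega^2}|\equiv1$, so no decay can be extracted from the phase, and one must instead show, directly from \eqref{res} and the uniform bound on $(\det D_n)^{-1}$ near the imaginary axis, that $\omega R_{i\omega}u_0(x)$ stays bounded --- in fact $O(|\omega|^{-1})$, uniformly in $x$ --- as $|\omega|\to\infty$. With such an estimate in hand \eqref{spectral.sch} is first established for $u_0\in C_c^\infty(\rr)$, where every integral is absolutely convergent and Fubini applies, and then extended by density. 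One also has to verify with care that the analytically continued \eqref{res} reproduces exactly the combination of resolvent boundary values dictated by Stone's formula, i.e.\ to keep track of the square-root branch and of the half-plane from which each limit is taken.
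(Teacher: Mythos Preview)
Your approach is correct and is essentially the one the paper takes: the paper also starts from the spectral representation of $e^{-itA}$ via the limiting absorption formula (Stone's formula), restricts the integral to $\sigma(A)=(0,\infty)$, changes variables $s=\tau^2$, identifies the two boundary values $(A-\tau^2\pm i0)^{-1}$ with $R_{\pm i\tau+0}$, folds the two terms into a single integral over $\rr$ by $\tau\mapsto -\tau$, and finally invokes the analytic continuation of $\omega R_\omega$ across the imaginary axis to drop the $+0$. Your discussion of convergence and density is more careful than the paper's terse treatment, but the argument is the same.
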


For completeness we prove these lemmas. 

\begin{proof}[Proof of Lemma \ref{spectral.wave0}]
Set $v_1(t)=v(t){\bf 1}_{\{t>0\}}$. It follows that $v_1$ satisfies 
$$\partial_{tt}v_1-\partial _x(a(x)\partial _x v_1)=\partial _t \delta_0 f.$$
Since $v_1(t)$ is supported on $(0,\infty)$ it follows that the Fourier transform in time variable of $v_1$ is holomorphic in the domain 
$\{\Im z<0\}$ and  verifies the equation
$$(-z^2-\partial _x(a(x)\partial _x )\hat v_1(z,\cdot)=iz f(\cdot), \quad \Im(z)<0.$$
Taking $z=\omega -i \eps$, $\omega\in \rr$, $\eps>0$ small enough we obtain that 
$$\hat v_1(\omega-i\eps)= i(\omega-i\eps) R_{i(\omega-i\eps)}f.$$
Using the inverse Fourier transform we get
$$v_1(t)= \frac 1{2\pi}\int _\rr e^{it\omega} i(\omega-i \eps)R_{i(\omega-i\eps)}fd\omega.$$
Since $\omega R_{\omega}$ can be analytically continued on $\{\Re z>-\delta\}$ we obtain the desired result.

A similar argument shows that 
$$v(t){\bf 1}_{\{t<0\}}=-\frac 1{2\pi}\int _\rr e^{it\omega} i(\omega+i \eps)R_{i(\omega+i\eps)}fd\omega.$$
The proof is now complete.
\end{proof}

\begin{proof}[Proof of Lemma \ref{spectral.sch0}]
Using the identity 
\begin{align*}
h(\lambda)&=\lim _{\eps \downarrow 0} \frac 1{2\pi i} \int _{\rr} h(s)\Big(\frac 1{\lambda -(s-i\eps)}-\frac 1{\lambda -(s+i\eps)}\Big)ds\\
&=\frac 1{2\pi i} \int _{\rr} h(s)\Big(\frac 1{\lambda -(s-i0)}-\frac 1{\lambda -(s+i0)}\Big)ds,
\end{align*}
classical spectral calculus gives us that
$$u(t,x)=e^{-itA}=\frac 1{2\pi i}\int _{\rr} e^{-its}\Big( (A-(s-i0))^{-1}-(A-(s+i0))^{-1}\Big)u_0 ds .$$
Since $\sigma(A)=(0,\infty)$ we have that $(A+z)^{-1}$ is analytic on $\cn \setminus (-\infty,0)$ and that
$$(A+(-\tau^2+i0))^{-1}=R_{i|\tau|+0}, \quad (A+(-\tau^2-i0))^{-1}=R_{-i|\tau|+0}.$$
Then
\begin{align*}
u(t,x)=&\frac 1{2\pi i}\int _{0}^\infty  e^{-its} \Big( (A-(s-i0))^{-1}-(A-(s+i0))^{-1}\Big)u_0 ds\\
&=\frac 1{\pi i}\int _{0}^\infty  e^{-it\tau ^2}\tau \Big( (A-(\tau^2-i0))^{-1}-(A-(\tau^2+i0))^{-1}\Big)u_0  d\tau \\
&=\frac 1{\pi i}\int _{0}^\infty  e^{-it\tau ^2} \tau\Big( R_{i\tau +0}u_0-R _{-i\tau+0}u_0\Big)  d\tau\\
&=\frac 1{\pi i}\int _{-\infty}^\infty  e^{-it\tau ^2} \tau  R_{i\tau +0}u_0  d\tau.
\end{align*}
Using now that $\omega R_{\omega}$ can be analytically continued on $\{\Re z>-\delta\}$ we obtain the desired result.
\end{proof}

In the following we denote by $\hat f$ and $f^\vee$ the Fourier and the inverse Fourier transform of the function $f$:
$$\hat f(\xi)=\int _{\rr}f(x)e^{-ix\xi}dx, \quad
f^\vee (x)=\frac 1{2\pi}\int _{\rr} f(\xi)e^{i\xi x}d\xi.$$

The proof of the main results of this paper requires the theory of almost periodic functions.
A function $f:\rr\rightarrow \cn$ is said to be almost-periodic if it can be represented as 
$$f(t)=\sum _{n}c_n e^{i\lambda_n t}, $$
and the following norm satisfies
$$\|f\|_{AP}=\sum _{n}|c_n|<\infty.$$
It is easy to see that the space of almost periodic functions is an algebra. For more details on the properties of these functions we refer to \cite{MR2460203}.

As observed in \cite{MR2049025}, the function $\det D_n(i\omega)$ is an almost periodic function. The same property is satisfied by $1/\det D_n(i\omega)$ even if this property is not trivial (see \cite{MR2049025}, section 2.2). 

Here, in addition to the results  in \cite{MR2049025}, we will compute exactly the coefficients $c_k$ in terms of vector $T$ 
and sequence $\{Q_k\}_{k=1}^n$ by solving the system $D_n(\omega)C=T$ (see Section \ref{main.section} below).
The argument  in \cite{MR2049025} only uses the fact that, since $C$ is a solution of the above system, then its components are finite sums of the terms in the vector $T$.
Also, instead of using the results in \cite[Section 2.2]{MR2049025} we control in a finer way  the sequence $\{Q_k\}_{k=1}^n$ introduced in \eqref{Q} and prove  the following two key lemmas.

\begin{lemma}\label{imp}
Let us consider two sequences of real numbers $(c_n)_{n\geq 1}$ and $(d_n)_{n\geq 1}$ with $ |d_n|\leq d<1$
satisfying 
\begin{equation}\label{small}
\sum _{n\geq 1}\arctanh |d_n|<\frac \pi 2.
\end{equation}
We also consider the following sequence of functions $Q_1(i\omega)\equiv 0$ and 
\begin{equation}\label{def.q}
Q_k(i \omega)=e^{ic_{k-1}\omega } \frac{-d_{k-1}+Q_{k-1}(i\omega)}{1-d_{k-1}Q_{k-1}(i\omega)}, \omega \in \rr, k\geq 2.
\end{equation}
Then for any $n\geq 2$ the following holds
\begin{equation}\label{est.ap}
\|Q_n\|_{AP}\leq \tan \Big(  \sum _{k\geq 1}\arctanh |d_k|\Big).
\end{equation}
\end{lemma}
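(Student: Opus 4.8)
The plan is to prove the bound \eqref{est.ap} by induction on $n$, tracking the $AP$-norm through the recursion \eqref{def.q}. The key observation is that the map $Q \mapsto \frac{-d + Q}{1 - dQ}$ is, for real $d$ with $|d| < 1$, a Möbius transformation preserving the open unit disk, and that multiplication by the unimodular factor $e^{ic\omega}$ does not change the $AP$-norm. So if I set $t_k = \arctanh(|d_k|)$ and $S_k = \sum_{j \le k} t_j$, the natural guess is that $\|Q_k\|_{AP} \le \tan(S_k)$, provided $S_k < \pi/2$ (which is guaranteed by \eqref{small}). The base case $k=1$ is trivial since $Q_1 \equiv 0$ and $\tan(0) = 0$; for $k=2$, $\|Q_2\|_{AP} = |d_1| = \tanh(t_1) \le \tan(t_1)$ since $\tanh \le \tan$ on $[0,\pi/2)$.

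For the inductive step, suppose $\|Q_{k-1}\|_{AP} \le \tan(S_{k-1}) =: r$. I want to bound $\|e^{ic_{k-1}\omega}\frac{-d_{k-1} + Q_{k-1}}{1 - d_{k-1}Q_{k-1}}\|_{AP}$. Since the $AP$-norm is submultiplicative (the $AP$ functions form a Banach algebra) and $e^{ic_{k-1}\omega}$ has $AP$-norm $1$, it suffices to bound $\|\frac{-d_{k-1} + Q_{k-1}}{1 - d_{k-1}Q_{k-1}}\|_{AP}$. Writing $d = |d_{k-1}| = \tanh(t_{k-1})$ and expanding the geometric series $\frac{1}{1 - d_{k-1}Q_{k-1}} = \sum_{m \ge 0} (d_{k-1}Q_{k-1})^m$ — which converges in $AP$-norm because $|d_{k-1}| \cdot \|Q_{k-1}\|_{AP} \le d \cdot r < 1$ (as $\tanh(t_{k-1})\tan(S_{k-1}) < 1$, a point that needs a short trigonometric check) — I get
$$\left\| \frac{-d_{k-1} + Q_{k-1}}{1 - d_{k-1}Q_{k-1}} \right\|_{AP} \le \frac{d + r}{1 - dr} = \frac{\tanh(t_{k-1}) + \tan(S_{k-1})}{1 - \tanh(t_{k-1})\tan(S_{k-1})}.$$
Now I replace $\tanh(t_{k-1})$ by $\tan(t_{k-1})$: since $x \mapsto \frac{x + r}{1 - rx}$ is increasing on the relevant range and $\tanh(t_{k-1}) \le \tan(t_{k-1})$, the right-hand side is at most $\frac{\tan(t_{k-1}) + \tan(S_{k-1})}{1 - \tan(t_{k-1})\tan(S_{k-1})} = \tan(t_{k-1} + S_{k-1}) = \tan(S_k)$, by the tangent addition formula. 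This closes the induction, and passing to the limit $n \to \infty$ in the monotone bound (or just reading off $S_n \le \sum_{k \ge 1} t_k < \pi/2$) gives \eqref{est.ap}.

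The main obstacle, and the place where care is required, is controlling the denominators so that all the geometric-series expansions actually converge in the $AP$-norm: one must verify $\tanh(t_{k-1}) \tan(S_{k-1}) < 1$ at every step, which follows from $t_{k-1} + S_{k-1} = S_k < \pi/2$ together with the elementary inequality $\tanh(a)\tan(b) < \tan(a)\tan(b) \le 1$ whenever $a + b \le \pi/2$ (the last step because $\tan(b) \le \cot(a) = 1/\tan(a)$ there). A secondary, more bookkeeping-type point is justifying that each $Q_k(i\omega)$ is genuinely an almost periodic function with finite $AP$-norm — but this is immediate from the inductive bound itself together with the fact that $AP$ is a Banach algebra closed under the operations used (unimodular exponential multiplication, addition, and inversion of elements bounded away from $0$ in norm), so the norm estimate and the qualitative $AP$ membership are proved simultaneously.
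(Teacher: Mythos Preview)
Your induction is correct, modulo a harmless indexing slip: with $S_k=\sum_{j\le k}t_j$ as you define it, the identity $t_{k-1}+S_{k-1}=S_k$ that you invoke at the end of the inductive step fails. The clean statement is $\|Q_k\|_{AP}\le\tan\big(\sum_{j=1}^{k-1}t_j\big)$, and with that convention everything closes: the Banach-algebra estimate $\big\|\frac{-d+Q}{1-dQ}\big\|_{AP}\le\frac{|d|+\|Q\|_{AP}}{1-|d|\,\|Q\|_{AP}}$ is valid once $|d|\,\|Q\|_{AP}<1$, your check of this via $a+b<\pi/2\Rightarrow\tan a\tan b<1$ is the right one, and the replacement $\tanh\le\tan$ followed by the tangent addition formula finishes the step.

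The paper takes a much longer and genuinely different route. It encodes the recursion \eqref{def.q} as a composition of M\"obius maps $\gamma(t_j)=\beta(\tanh t_j)$ and dilations $\alpha(q_j)$ acting on $0$, views the result as a formal multivariable power series $R(t;q)$, and studies $f_r(x)=\sup_{t\in A_x}\|R(t)^r\|$ where the supremum runs over all partitions $t$ of $x=\sum_j t_j$. It then proves that the $f_r$ are differentiable and satisfy the coupled system $f_r'=r(f_{r-1}+f_{r+1})$, solves a first-order PDE for the generating function $g(x,y)=\sum_{r\ge1}f_r(x)y^{r-1}$, and reads off $f_r(x)=\tan^r x$ \emph{exactly}. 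What your argument buys is brevity and transparency: Lemma~\ref{imp} follows in a paragraph from nothing more than the algebra structure of $AP$ and one trigonometric identity. What the paper's machinery buys is sharpness: because it establishes the \emph{equality} $f_1(x)=\tan x$ rather than merely the upper bound, it immediately yields $f_1(\pi/2)=+\infty$, which is exactly the input to Lemma~\ref{negative} and hence to the counterexample of Theorem~\ref{no-disp}. Your upper-bound-only induction gives no handle on that direction.
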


Moreover, Lemma \ref{imp} is the best result possible in the following
sense:

\begin{lemma}\label{negative}
Let $\alpha >0$ and $0<d<1$. For any $\e>0$ and any $N>0$ there exist
a positive integer $n$ and a sequence $0<\tilde {d}_k\leq d$,
$k=1,\ldots,n-1$, with
$$\sum _{k=1}^{n-1}\arctanh \tilde d_k=\alpha
$$
such that  for any sequence  $\{c_k\}_{k= 1}^{n-1}$ of numbers
linearly independent over the rationals and any sequence $\{d_k\}_{k=
1}^{n-1}$ with $|d_k|=\tilde d_k$ the sequence $\{Q_k\}_{k=1}^n$ defined
in \eqref{def.q} satisfies
\begin{equation}\label{mare}
\|Q_n\|_{AP}\geq\tan\a -\e\quad\text{if }\a <\pi/2\quad\text{and}
\end{equation}
\begin{equation}\label{large}
\|Q_n\|_{AP}\geq N\quad\text{if }\a\geq\pi/2.
\end{equation}
\end{lemma}

\begin{remark}
Let us remark that the reflection coefficients  $d_k$  in
\eqref{reflex}  can be rewritten as
\begin{equation}\label{log}
d_k=\tanh  \Big( \frac {\log (b_{k-1})-\log (b_k)}2\Big).
\end{equation}
Since $|\tanh x|=\tanh |x|$ we have 
$$|d_k|=\tanh  \Big| \frac {\log (b_{k-1})-\log (b_k)}2\Big|.$$
As a consequence: 
\begin{equation}\label{indentity.bv}
\sum _{k\geq 1}\arctanh |d_k|=\sum_{k\geq 1}  \Big| \frac {\log (b_{k-1})-\log (b_k)}2\Big|=\frac{\var (\log (a))}4.
\end{equation}
Also, since $a$ satisfies \eqref{cond.a} we also have
\begin{equation}\label{220a}
\frac {\var (a)}M\leq \var (\log a)\leq \frac {\var (a)}m.
\end{equation}
\end{remark}

With  Lemma \ref{imp} and Lemma \ref{negative} we can prove Theorem \ref{disp.wave} and Theorem \ref{no-disp}. Theorem \ref{disp.sch} will be a consequence of Theorem \ref{disp.wave}.

Let us now comment on how these lemmas apply to obtain the main results in this paper.
The key point in the proof of Theorem \ref{disp.wave} is, as we will see in Section \ref{main.section},  that for a step function $a$ as in \eqref{partition} and \eqref{function.a} the following holds
\begin{equation}\label{key.1}
\displaystyle \sup_{v_0\in L^1(\rr)}\frac{\displaystyle\sup_{x\in \rr}\int_{\rr} |v(t,x)|dt}{\|v_0\|_{L^1(\rr)}}\simeq
\sup_{v_0\in L^1(\rr)}\frac{\displaystyle \|(Q_n(i\omega) \widehat v_0 )^\vee)\|_{L^1(\rr)}}{\|v_0\|_{L^1(\rr)}}=
\|Q_n\|_{AP}.
\end{equation}
Thus the results given by Lemma \ref{imp} and Lemma \ref{negative} on the $AP$-norm of $Q_n$ provide results for the behavior of the solutions of the wave equation \eqref{wave}.

In the case of the Schr\"odinger equation \eqref{sch}, using the same arguments as in the case of the wave equation,   we have that
\begin{equation}\label{key.2}
\sup_{u_0\in L^1(\rr)}\frac{t^{1/2}\|u(t)\|_{L^\infty(\rr)}}{\|u_0\|_{L^1(\rr)}}\simeq 
\sup_{u_0\in L^1(\rr)}\frac{t^{1/2}\|(e^{it\omega^2}Q_n(i\omega)\widehat u_0)^\vee \|_{L^\infty(\rr)}}{\|u_0\|_{L^1(\rr)}}.
\end{equation}
Applying Young's inequality it is immediate that for all $t>0$ the following holds
\begin{align*}\label{key.3}
\sup_{u_0\in L^1(\rr)} & \frac{t^{1/2}\|(e^{it\omega^2}Q_n(i\omega)\widehat u_0)^\vee \|_{L^\infty(\rr)}}{\|u_0\|_{L^1(\rr)}}\leq 
\sup_{v_0\in L^1(\rr)}\frac{\|(Q_n(i\omega)\widehat v_0)^\vee \|_{L^1(\rr)}}{\|v_0\|_{L^1(\rr)}}\leq \|Q_n\|_{AP}.
\end{align*}
However, we cannot say that the right hand side in \eqref{key.2} is comparable with $\|Q_n\|_{AP}$. This is why we have only a positive result  when the   $BV$-norm of the coefficient $a$ is small. The optimality of the result in Theorem \ref{disp.sch} is still an open problem.

%
%
%
%
%

\section{Proof of the main results}\label{main.section}
\setcounter{equation}{0}

 The aim of this section is to prove the main results of this paper. 
We first concentrate on the case of the wave equation \eqref{wave}.
We will  prove that the solution of equation \eqref{wave} satisfies 
 \begin{equation}\label{dispersie.2}
\sup _{x\in \rr} \int _{\rr} |v(t,x)|dt\leq {C\Big(\var (a),m,M\Big)}\|v_0\|_{L^1(\rr)}.
\end{equation}

An argument similar to the one in \cite[Prop.~1.3]
{MR2227135} shows that it is sufficient to prove results for piecewise constant functions $a$ taking a finite number of values.
Let us consider a laminar medium as in Section \ref{resolvent}.
The key point in our proof is that the above estimate is equivalent with the fact that $Q_n(i\omega)$ is an $L^1(\rr)$-Fourier multiplier and the norm of $Q_n$ as a $L^1(\rr)$-multiplier can be estimated in terms of the variation of $\log (a)$.


\begin{proof}[Proof of Theorem \ref{disp.wave}]
Using the spectral formula  \eqref{spectral.wave} and the representation of the resolvent $R_{i\omega}$ obtained in the previous section, for any $x\in I_k$ and $t>0$,   the solution $v$ of equation \eqref{wave} can be written as
\begin{align*}
v(t,x)&=\\
=&\int _{\rr} \Big(
i\omega   c_{2k-1}(i\omega)e^{i\omega(t+b_kx)}+i\omega c_{2k}(i\omega)e^{i\omega(t- b_k x)} \Big)\frac{d\omega}{2\pi}
+\frac{b_k}{4\pi}\int _{\rr} e^{it\omega} \int _{I_k} {v_0(y)}e^{-i\omega b_k|x-y|}{dy d\omega}\\
=&{(i\omega c_{2k-1}(i\omega))}^\vee(t+b_kx)+
{(i\omega c_{2k}(i\omega))}^\vee(t-b_kx)+\tilde v(t,x)
\end{align*}
where 
\begin{align*}
\tilde v(t,x)=&\frac{b_k}{4\pi}\int _{\rr} e^{it\omega} \int _{I_k} ({v_0} {\bf{1}}_{\{x_{k-1}<y<x)\}})(y)e^{-i\omega b_k (x-y)}{dy d\omega}\\
&+\frac{b_k}{4\pi}\int _{\rr} e^{it\omega} \int _{I_k} ({v_0} {\bf{1}}_{\{x<y<x_k)\}})(y)e^{-i\omega b_k (y-x)}{dy d\omega}\\
=&\frac 1{2} \int _{\rr} e^{i\omega (t/b_k -x)}  ({v_0} {\bf{1}}_{\{x_{k-1}<y<x)\}})^\vee(\omega)d\omega+\frac 1{2}
\int _\rr{e^{i\omega(t/b_k+x)}} ({v_0} {\bf{1}}_{\{x<y<x_k)\}})^{\wedge}(\omega)d\omega\\
=& \frac 12   ({v_0} {\bf{1}}_{\{x_{k-1}<y<x)\}})(x-\frac t{b_k})+\frac 12 ({v_0} {\bf{1}}_{\{x<y<x_k)\}})(x+\frac  t{b_k}).
\end{align*}

It is easy to see that
\begin{equation}\label{est.tilde}
\int _{\rr}|\tilde v(t,x)|dt\leq b_k \|v_0\|_{L^1(\rr)}\leq m^{-2}\|v_0\|_{L^1(\rr)}.
\end{equation}
Since for $t<0$, $v(t,x)=v(-t,x)$ we have 
$$\int _{\rr}|v(t,x)|dt=2\int_{0}^\infty |v(t,x)|dt.$$
Hence, in order to prove estimate \eqref{dispersie.2}, it remains to show that for any $j=1,\ldots,n$ and for any $x\in I_j$ the following holds:
\begin{equation}\label{main.ineq.vechi}
\int _{\rr}|{(i\omega c_{2j-1}(i\omega))}^\vee(t+b_jx)+
{(i\omega c_{2j}(i\omega))}^\vee(t-b_jx)|dt\leq{C\Big(\var (a),m,M\Big)}\|v_0\|_{L^1(\rr)}.
\end{equation}
In fact we will prove a stronger estimate: for any $1\leq j\leq n-1$ the following holds:
\begin{equation}\label{main.ineq}
\int _{\rr}\Big( |{(i\omega c_{2j-1}(i\omega))}^\vee(x)|+
|{(i\omega c_{2j}(i\omega))}^\vee(x)|\Big)dx\leq{C\Big(\var (a),m,M\Big)}\|v_0\|_{L^1(\rr)}.
\end{equation}
We also remark that since $c_{2}=c_{2n-1}=0$,  when $j\in \{1,n-1\}$,  the two estimates \eqref{main.ineq.vechi} and \eqref{main.ineq} are the same.
Estimate \eqref{main.ineq} is the key  not only in the proof of Theorem \ref{disp.wave} but also in the one of Theorem \ref{disp.sch}. Moreover,
we  point out that it is sufficient to consider $v_0$ to be supported in one of the intervals $I_k$, $k=1,\ldots,n$ since by linearity the result extends to any function $v_0\in L^1(\rr)$. 

Since there is a strong connection between the $c$'s and $t$'s we observe that for $k=1,\ldots, n-1$,
\begin{equation}\label{t1}
i\omega t_{k,1}(i\omega)=-\frac {b_k}2e^{-i\omega b_kx_k}\widehat{v_0{\bf 1}_{I_k}}(-b_k\omega)+\frac {b_{k+1}}2e^{-i\omega b_{k+1}x_{k}}\widehat{v_0{\bf 1}_{I_{k+1}}}(b_{k+1}\omega)
\end{equation}
and
\begin{equation}\label{t2}
i\omega t_{k,2}(i\omega)=\frac {b_kb_{k+1}}2\Big(
e^{-i\omega b_kx_k}\widehat{v_0{\bf 1}_{I_k}}(-b_k\omega)+e^{-i\omega b_{k+1}x_{k}}\widehat{v_0{\bf 1}_{I_{k+1}}}(b_{k+1}\omega)\Big).
\end{equation}
An immediate consequence is that
\begin{equation}\label{t.1}
\int _{\rr}|(i\omega t_{k,1}(i\omega))^\vee(x)|dx\leq \frac{|b_k|+|b_{k+1}|}2\int _{\rr}|v_0|\leq c(m)\int _{\rr}|v_0|
\end{equation}
and
\begin{equation}\label{t.2}
\int _{\rr}|(i\omega t_{k,2}(i\omega))^\vee(x)|dx\leq {|b_k||b_{k+1}|}\int _{\rr}|v_0|\leq c(m)\int _{\rr}|v_0|.
\end{equation}

The main steps in the proof of \eqref{main.ineq} are the following:
\begin{itemize}

\item Prove \eqref{main.ineq} for $j=n$ and ${\rm{supp}}\, v_0\subset I_n $.

\item Prove \eqref{main.ineq} for  $j=n$ and ${\rm{supp}}\, v_0\subset I_1 $.

 By symmetry the same holds for $j\in \{1,n\}$ and ${\rm{supp}}\, v_0\subset I_1 \cup I_n$.

\item  Prove \eqref{main.ineq} for $j=n$ and ${\rm{supp}}\, v_0\subset I_k $, $2\leq k\leq n-1$.

\item  Prove \eqref{main.ineq} for $j\in \{k,\ldots,n-1\}$ and ${\rm{supp}}\, v_0\subset I_k $ with  $2\leq k\leq n-1$.

 By symmetry the same holds for $x\in I_j$  and ${\rm{supp}}\, v_0\subset I_k $, $2\leq k\leq n-1$ with $1\leq j\leq k-1$.

\end{itemize}

\textbf{Case 1. Computing $c_{2n}$ when  ${\rm{supp}}\, v_0\subset I_n $. } In this case  all $t_{k}$, $k=1,\ldots, n-2$, vanish. Moreover $t_{n-1,2}=b_{n-1}t_{n-1,1}$. It follows that
\begin{align*}
c_{2n}(i\omega)&=(\det D_n(i\omega))^{-1}
\left|\begin{array}{cccccccc}{\bf a}_1 & B_1 & 0 & 0 & 0 & 0 & 0 & 0 \\0 & A_2 & B_2 & 0 & 0 & 0 & 0 & 0 \\0 & 0 & A_3 & B_3 & 0 & 0 & 0 & 0 \\- & - & - & - & - & - & - & - \\0 & 0 & 0 & 0 & A_{n-3} & B_{n-3} & 0 & 0 \\0 & 0 & 0 & 0 & 0 & A_{n-2} & B_{n-2} & 0 \\0 & 0 & 0 & 0 & 0 & 0 & A_{n-1} & t_{n-1}\end{array}\right|.
\end{align*}
Developing over the last two lines the above determinant we obtain 
\begin{align*}
c_{2n}(i\omega)&=
-\left|\begin{array}{cc}e^{i\omega b_{n-1}x_{n-1}} & t_{n-1,1} \\b_ne^{i\omega b_{n-1}x_{n-1}} & t_{n-1,2}\end{array}\right|
\frac{\det D_{n-1}(i\omega)}{\det D_n(i\omega)}
+\left|\begin{array}{cc}e^{-i\omega b_{n-1}x_{n-1}} & t_{n-1,1} \\-b_ne^{-i\omega b_{n-1}x_{n-1}} & t_{n-1,2}\end{array}\right| \frac{\det\tilde D_{n-1}(i\omega)}{\det D_n(i\omega)}
\\
&= -e^{i\omega b_{n-1}x_{n-1}} \left|\begin{array}{cc}1 & 1 \\b_n & b_{n-1}\end{array}\right|
t_{n-1,1} \frac{\det D_{n-1}(i\omega)}{\det D_n(i\omega)} \\
&\quad+e^{-i\omega b_{n-1}x_{n-1}} \left|\begin{array}{cc}1 & 1 \\-b_n & b_{n-1}\end{array}\right|
t_{n-1,1}\frac{\det \tilde D_{n-1}(i\omega) )}{\det D_n(i\omega)}\\
&=t_{n-1,1}
\Big( (b_{n-1}+b_n) e^{-i\omega b_{n-1}x_{n-1}}\frac{\det \tilde D_{n-1}(i\omega) )}{\det D_n(i\omega)}+(b_n-b_{n-1})e^{i\omega b_{n-1}x_{n-1}}\frac{\det  D_{n-1}(i\omega) )}{\det D_n(i\omega)}\Big)\\
&=-t_{n-1,1}e^{-i \omega b_n x_{n-1}}\frac {\det \tilde D_n(i\omega)}{\det D_n(i\omega)},
\end{align*}
where the last identity involving $\det \tilde D_n$, $\det \tilde D_{n-1}$ and $\det D_{n-1}$ has been proved in \cite[p.~871]{MR2049025}.
 
From \eqref{t1} we have that
$$i\omega t_{n-1,1}(i\omega)=\frac {b_n}2e^{-i\omega b_nx_{n-1}}\widehat{v_0I_n}(b_n\omega)$$
and then we obtain the exact formula of $c_{2n}(i\omega)$ in terms of $Q_n(i\omega)$:
\begin{equation}\label{c.2n}
i\omega c_{2n}(i\omega)=-\frac{b_n}2 e^{-2i \omega b_nx_{n-1}} \frac{\det \tilde D_n(i\omega)}{\det  D_n(i\omega)}=
-\frac{b_n}2 e^{-2i \omega b_nx_{n-1}} Q_n(i\omega)
\widehat{v_0I_n}(b_n\omega).
\end{equation}
This identity is the key point in  proving  not only Theorem \ref{disp.wave} but also Theorem \ref{no-disp}.

It follows that 
\begin{equation}\label{caso.1}
\int _{\rr} | {(i\omega c_{2n}(i\omega))}^\vee(x)|dx\leq \frac{b_n}2 \|Q_n(i\omega)\|_{AP} \|v_0\|_{L^1(\rr)}
\end{equation}
and in view of Lemma \ref{imp} we have
$$\int_{\rr}|(i\omega c_{2n}(i\omega))^\vee(x)|dx\leq C\Big(M,m, \tan \big(\sum _{k=1}^n \arctanh |d_n|\big)\Big).$$
Thus by \eqref{indentity.bv} and \eqref{220a}  the proof of the theorem in  this case is finished. 

\textbf{Case 2. Computing $c_{2n}$ when  ${\rm{supp}}\, v_0\subset I_1 $. } Here all the terms $t_k$, $k=2,\ldots,n-1$ vanish and 
$t_{1,2}=-b_2t_{1,1}$. Hence
\begin{align*}
c_{2n}(i\omega)&=(\det D_n(i\omega))^{-1}
\left|\begin{array}{cccccccc}{\bf a}_1 & B_1 & 0 & 0 & 0 & 0 & 0 & t_{1} \\0 & A_2 & B_2 & 0 & 0 & 0 & 0 & 0 \\0 & 0 & A_3 & B_3 & 0 & 0 & 0 & 0 \\- & - & - & - & - & - & - & - \\0 & 0 & 0 & 0 & A_{n-3} & B_{n-3} & 0 & 0 \\0 & 0 & 0 & 0 & 0 & A_{n-2} & B_{n-2} & 0 \\0 & 0 & 0 & 0 & 0 & 0 & A_{n-1} & 0\end{array}\right|.
\end{align*}
Developing the above determinant over blocks of two lines  we obtain that
\begin{align*}
c_{2n}(i\omega)&=\frac{\det A_2\cdots\det A_{n-1}}{\det D_n(i\omega)} 
\left|\begin{array}{cc}e^{i\omega b_{1}x_{1}} & t_{1,1} \\b_2e^{i\omega b_{1}x_{1}} & t_{1,2}\end{array}\right|
=\frac{\det A_1 \cdots\det A_{n-1}}{\det D_n(i\omega)} e^{i\omega b_1 x_1}t_{1,1}(i\omega).
\end{align*}
Using  estimate \eqref{t.1} on $t_{1,1}$ we get 
%
%
\begin{equation}\label{caso.2}
\int _{\rr} | {(i\omega c_{2n}(i\omega))}^\vee(x)|dx\leq \left\|\frac{\det A_1 \cdots\det A_{n-1}}{\det D_n(i\omega)} \right\|_{AP} \|v_0\|_{L^1(\rr)}.
\end{equation}
Since  the proof of estimate \eqref{main.ineq} is the same as in Case 3 below (choose  $k=1$ in \eqref{a1}) we will skip it here.

\textbf{Case 3. Computing $c_{2n}$ when  ${\rm{supp}}\, v_0\subset I_k $, $2\leq k \leq n-1$.} Here $t_{k-1,2}=b_{k-1}t_{k-1,1}$, 
$t_{k,2}=-b_{k+1}t_{k,1}$, 
 all the other terms in  vector $T$ vanishing.
Let us now compute $c_{2n}$. It is given by 
\begin{align*}
c_{2n}&(i\omega)(\det D_n(i\omega))=
\left|\begin{array}{cccccccc}
a_1 & B_1 & 0 & 0 & 0 & 0 & 0 & 0 \\
0 & A_2 & B_2 & 0 & 0 & 0 & 0 & 0 \\
0 & 0 & A_3 & B_3 & 0 & 0 & 0 & 0 \\
- & - & - & - & - & - & - & - \\
-&-&-& A_{k-1} & B_{k-1} & -&-& t_{k-1}\\
0 & 0 & 0 & 0 & A_{k} & B_{k} & 0 & t_k \\
- & - & - & - & - & - & - & - \\
0 & 0 & 0 & 0 & 0 & A_{n-2} & B_{n-2} & 0 \\
0 & 0 & 0 & 0 & 0 & 0 & A_{n-1} & 0\end{array}\right|\\
=&
\left|\begin{array}{cccccccc}
a_1 & B_1 & 0 & 0 & 0 & 0 & 0 & 0 \\
0 & A_2 & B_2 & 0 & 0 & 0 & 0 & 0 \\
0 & 0 & A_3 & B_3 & 0 & 0 & 0 & 0 \\
- & - & - & - & - & - & - & - \\
-&-&-& A_{k-1} & B_{k-1} & -&-& t_{k-1}\\
0 & 0 & 0 & 0 & A_{k} & B_{k} & 0 & 0 \\
- & - & - & - & - & - & - & - \\
0 & 0 & 0 & 0 & 0 & A_{n-2} & B_{n-2} & 0 \\
0 & 0 & 0 & 0 & 0 & 0 & A_{n-1} & 0\end{array}\right|+
\left|\begin{array}{cccccccc}
a_1 & B_1 & 0 & 0 & 0 & 0 & 0 & 0 \\
0 & A_2 & B_2 & 0 & 0 & 0 & 0 & 0 \\
0 & 0 & A_3 & B_3 & 0 & 0 & 0 & 0 \\
- & - & - & - & - & - & - & - \\
-&-&-& A_{k-1} & B_{k-1} & -&-& 0\\
0 & 0 & 0 & 0 & A_{k} & B_{k} & 0 & t_k \\
- & - & - & - & - & - & - & - \\
0 & 0 & 0 & 0 & 0 & A_{n-2} & B_{n-2} & 0 \\
0 & 0 & 0 & 0 & 0 & 0 & A_{n-1} & 0\end{array}\right|.
\end{align*}
Developing the determinants over blocks of two lines   we find that
\begin{align*}
c_{2n}(i\omega)(\det D_n(i\omega))&= \det A_k\cdots \det A_{n-1} 
\left|\begin{array}{cccccccc}a_1 & B_1 & 0 & 0 & 0 & 0 & 0 & 0 \\0 & A_2 & B_2 & 0 & 0 & 0 & 0 & 0 \\0 & 0 & A_3 & B_3 & 0 & 0 & 0 & 0 \\- & - & - & - & - & - & - & - \\0 & 0 & 0 & 0 & A_{k-3} & B_{k-3} & 0 & 0 \\0 & 0 & 0 & 0 & 0 & A_{k-2} & B_{k-2} & 0 \\0 & 0 & 0 & 0 & 0 & 0 & A_{k-1} & t_{k-1}\end{array}\right|\\
&+ \det A_{k+1}\cdots \det A_{n-1} 
\left|\begin{array}{cccccccc}a_1 & B_1 & 0 & 0 & 0 & 0 & 0 & 0 \\0 & A_2 & B_2 & 0 & 0 & 0 & 0 & 0 \\0 & 0 & A_3 & B_3 & 0 & 0 & 0 & 0 \\- & - & - & - & - & - & - & - \\0 & 0 & 0 & 0 & A_{k-2} & B_{k-2} & 0 & 0 \\0 & 0 & 0 & 0 & 0 & A_{k-1} & B_{k-1} & 0 \\0 & 0 & 0 & 0 & 0 & 0 & A_{k} & t_{k}\end{array}\right|.
\end{align*}
Since the components of $t_{k-1}$ satisfy $t_{k-1,2}=b_{k-1}t_{k-1,1}$ we can use the same argument as in Case 1 and we obtain that the first determinant equals
$$-t_{k-1,1}e^{-i\omega b_{k}x_{k-1}}\det \tilde D_k(i\omega).$$
The second one could be computed in a similar way by expanding the determinant over the last two lines and using that $t_{k,2}=-b_{k+1}t_{k,1}$ 
\begin{align*}
&
\left|\begin{array}{cccccccc}a_1 & B_1 & 0 & 0 & 0 & 0 & 0 & 0 \\0 & A_2 & B_2 & 0 & 0 & 0 & 0 & 0 \\0 & 0 & A_3 & B_3 & 0 & 0 & 0 & 0 \\- & - & - & - & - & - & - & - \\0 & 0 & 0 & 0 & A_{k-2} & B_{k-2} & 0 & 0 \\0 & 0 & 0 & 0 & 0 & A_{k-1} & B_{k-1} & 0 \\0 & 0 & 0 & 0 & 0 & 0 & A_{k} & t_{k}\end{array}\right|\\[10pt]
&=
-  \left|\begin{array}{cc}e^{i\omega b_kx_k} & t_{k,1} \\b_{k+1}e^{i\omega b_kx_k} & -b_{k+1}t_{k,1}\end{array}\right|\det D_k(i\omega)+
 \left|\begin{array}{cc}e^{-i\omega b_kx_k} & t_{k,1} \\ -b_{k+1}e^{-i\omega b_kx_k} & -b_{k+1}t_{k,1}\end{array}\right|\det \tilde D_k(i\omega)\\[10pt]
 &=2b_{k+1}t_{k,1}\det D_k(i\omega)e^{i\omega b_kx_k}=-\det (A_k)t_{k,1}\det D_k(i\omega)e^{i\omega b_kx_k}.
\end{align*}
This gives us that
\begin{align*}
c_{2n}(i\omega)&=-\frac{ \det A_k\cdots \det A_{n-1}}{\det D_n(i\omega)}\Big(t_{k-1,1}e^{-i\omega b_kx_{k-1}}\det \tilde D_k(i\omega)+
t_{k,1}e^{i\omega b_kx_k}\det D_k(i\omega)\Big)\\
&=-\frac{ \det A_k\cdots \det A_{n-1}\det D_k(i\omega)}{\det D_n(i\omega)}\Big(t_{k-1,1}e^{i\omega b_k(x_k-x_{k-1})}Q_k(i\omega)+
t_{k,1}\Big)e^{i\omega b_kx_k}.
\end{align*}

Applying estimate \eqref{t.1} for $t_{k,1}$ and $t_{k-1,1}$ we obtain that
\begin{align}\label{a1}
\int _{\rr} & | {(i\omega   c_{2n}(i\omega))}^\vee(t)|dt \\
 \nonumber &\leq C(m) (\|Q_k\|_{AP}+1)\Big\| \frac{ \det A_k\cdots \det A_{n-1}\det D_k(i\omega)}{\det D_n(i\omega)}\Big\|_{AP}\|v_0\|_{L^1(\rr)}.
\end{align}
Using now formula \eqref{Dn} that gives us the explicit expression of $\det D_n$ and $\det D_k$ we find that
\begin{align}\label{a2}
\frac{ \det A_k\cdots \det A_{n-1}\det D_k(i\omega)}{\det D_n(i\omega)}&=\prod _{j=k}^{n-1} \frac{\det A_j}{b_j+b_{j+1}} \frac{e^{-i\omega (b_j-b_{j+1})x_j} } {(1-d_jQ_j(i\omega))}
\\
\nonumber & =(-1)^{n-k}\prod _{j=k}^{n-1} \frac{(1-d_j)}{(1-d_jQ_j(i\omega))}e^{-i\omega (b_j-b_{j+1})x_j}.
\end{align}
Observe now that 
$$\frac{(1-d_j)}{(1-d_jQ_j(i\omega))}=
\left\{
\begin{array}{rr}
\frac{1}{1+d_j} (1+d_jQ_{j+1}(i\omega)e^{2i\omega b_{j+1}(x_{j+1}-x_j)}),& j=1,\ldots,n-2,\\[10pt]
\frac{1}{1+d_j} (1+d_jQ_{j+1}(i\omega)e^{-2i\omega b_{j+1}x_j}),& j=n-1.
\end{array}
\right.
$$
It gives us that for any $1\leq j\leq n-1$ we have
$$\Big\|\frac{(1-d_j)}{(1-d_jQ_j(i\omega))} \Big\|_{AP}\leq \frac {1+|d_j|\|Q_{j+1}\|_{AP} }{1+d_j}\leq \frac{\exp(|d_j|\|Q_{j+1}\|_{AP})}{1+d_j}$$
and then
\begin{align}\label{a3}
\Big\|\prod _{j=k}^{n-1} &\frac{(1-d_j)}{(1-d_jQ_j(i\omega))} \Big\|_{AP}
\leq \exp\Big(\max_{j=k,\ldots, n-1} \|Q_{j+1}\|_{AP}\sum _{j=k}^{n-1} |d_j| \Big).
 \prod _{j=k}^{n-1}\frac 1{1+d_j}\\
\nonumber &\leq \exp\Big(c(m,M)\var (a)\max_{j=k+1,\ldots, n} \|Q_{j}\|_{AP}\Big).
 \prod _{j=k}^{n-1}\frac 1{1+d_j}\\
  \nonumber &\leq \exp\Big(c(m,M)\var (a) \tan (\var(\log (a)) /4) \Big).
 \prod _{j=k}^{n-1}\frac 1{1+d_j}.
\end{align}
The last term also satisfies
\begin{equation}\label{a4}
\prod _{j=k}^{n-1}\frac 1{1+d_j}=\prod _{j=k}^{n-1}\frac {b_j+b_{j+1}}{2b_{j}}=\prod _{j=k}^{n-1}\Big (1+\frac{|b_{j+1}-b_j|}{2b_j}\Big)\leq \exp\Big(C(m,M)\var (a)\Big).
\end{equation}
Putting now toghether estimates \eqref{a1}, \eqref{a2}, \eqref{a3} and \eqref{a4} we obtain that estimate \eqref{main.ineq}
also holds in the case considered here.

\textbf{Case 4. Prove \eqref{main.ineq} when ${\rm{supp}}\, v_0\subset I_k $, $2\leq k\leq n-1$ and $k\leq j\leq n-1$. } 
The previous cases prove \eqref{main.ineq} for $j=n$. Let us now prove that it holds for any $k\leq j\leq n-1$. We point out that once estimate \eqref{main.ineq} will be proved then it also holds for $1\leq j\leq k-1$. 

We now use that 
\begin{equation}\label{a}
A_{n-1}\left(\begin{array}{c}c_{2n-3} \\c_{2n-2}\end{array}\right)+{\bf b}_{n-1}c_{2n}=
 \left(\begin{array}{c}0\\ 0
 \end{array}\right)
\end{equation}
and for $j\leq n-2$,
\begin{equation}\label{b}
A_j \left(\begin{array}{c}c_{2j-1} \\c_{2j}\end{array}\right)+B_j \left(\begin{array}{c}c_{2j+1} \\c_{2j+2}\end{array}\right)=
 \left(\begin{array}{c}0\\ 0
 \end{array}\right).
\end{equation}
 From identity \eqref{a} and the results of Case 3, we obtain that \eqref{main.ineq} holds for $j=n-1$:
 \begin{align*}
 \int _{\rr} \Big(|{(i\omega c_{2n-2}(i\omega))}^\vee(x)|+&
|{(i\omega c_{2n-3}(i\omega))}^\vee(x)|\Big)dx\leq C(m,M)\int _{\rr} |{(i\omega c_{2n}(i\omega))}^\vee(x)|dx\\
&\leq C(m,M,\var(a))\int _{\rr}|v_0(x)|dx. 
 \end{align*}
Applying  identity \eqref{b} we obtain  
\begin{align*}
\left(\begin{array}{c}c_{2j-1} \\c_{2j}\end{array}\right)=&-A_j^{-1}B_j \left(\begin{array}{c}c_{2j+1} \\c_{2j+2}\end{array}\right)\\
=&
\frac {1}{2b_{j+1}}\left(\begin{array}{cc}(b_{j+1}+b_j)e^{i\omega(b_{j+1}-b_j)x_j} & (b_{j+1}-b_j)e^{-i\omega(b_{j+1}+b_j)x_j} \\(b_{j+1}-b_j)e^{i\omega(b_{j+1}+b_j)x_j} & (b_{j+1}+b_j)e^{-i\omega(b_{j+1}-b_j)x_j}\end{array}\right)
\left(\begin{array}{c}c_{2j+1} \\c_{2j+2}\end{array}\right).
\end{align*} 
It implies that for $j\leq n-2$ we have
\begin{align*}
\int _{\rr} |{(i\omega c_{2j-1}(i\omega))}^\vee(t)|&+
|{(i\omega c_{2j}(i\omega))}^\vee(t)|dt \\
\leq &\frac{|b_{j+1}-b_j|+b_j+b_{j+1}}{2b_{j+1}}\int _{\rr}|{(i\omega c_{2j+1}(i\omega))}^\vee(t)|+
|{(i\omega c_{2j+2}(i\omega))}^\vee(t)|dt\\
\leq & \left(1+\frac{|b_{j+1}-b_j|}{b_{j+1}}\right)\int _{\rr}|{(i\omega c_{2j+1}(i\omega))}^\vee(t)|+
|{(i\omega c_{2j+2}(i\omega))}^\vee(t)|dt.
\end{align*}
Using the same argument as in \eqref{a4} we obtain that 
for any $k\leq j\leq n-1$
\begin{equation}\label{}
\int _{\rr} |{(i\omega c_{2j-1}(i\omega))}^\vee(t)|+
|{(i\omega c_{2j}(i\omega))}^\vee(t)|dt \leq C(m,M,\var(a))\int _{\rr}|v_0|. 
\end{equation}
The proof of Theorem \ref{disp.wave} is now complete.
\end{proof}

\begin{proof}[Proof of Theorem \ref{no-disp}]
In this section we describe the manner in which the  coefficient $a$ satisfying the conditions in Theorem \ref{no-disp} can be constructed.
Without restricting the generality, we will construct a coefficient $a$ with $1/2\leq a\leq 2$. 

Let fix $d=\tanh((\log 2)/2)$. Since   $\alpha\geq \pi/2$, by Lemma \ref{negative},  for any $N>0$ there exists a sequence $\{\tilde d_k\}_{k=1}^{n-1}$ such that 
$$\sum _{k=1}^{n-1}\arctanh \tilde d_k=\alpha, \quad  0\leq \ \tilde d_k\leq d, \ \forall\ k=1,\ldots, n-1,
$$
and for any sequence of rationally independent  numbers $\{c_k\}_{k=1}^n$ and any  $\eps_k\in \{\pm 1\}$, $k=1,\ldots, n-1$, 
the  sequence $\{Q_k\}_{k=1}^n$ associated to $\{c_k\}_{k=1}^{n-1}$ and $\{\eps_k \tilde d_k\}_{k=1}^{n-1}$
 satisfies
$\|Q_n(i \omega)\|_{AP}\geq 2N$.

Let us set  $d_k=\eps_k \tilde d_k=\eps _k |d_k|$ where $\eps_k\in \{\pm 1\}$ will be chosen later.
We now  choose a sequence $\{b_k\}_{k=1}^n$ such that
$$d_k=\eps _k|d_k|=\frac{b_{k}-b_{k+1}}{b_{k}+b_{k+1}}<0,\quad k=1,\ldots, n-1.$$
This is possible since we can define recursively 
$$\frac{b_k}{b_{k+1}}=\exp(2\arctanh d_k), k= 1,\ldots, n-1, \quad b_1=1.$$

Let    $a$  be the piecewise constant function defined by
$$a(x)=b_k^{-2}, \quad x\in (x_{k-1},x_k),\quad k=1,\ldots, n,$$
where $x_0=-\infty$, $x_n=\infty$  and the points $\{x_k\}_{k=1}^{n-1}$ are chosen such that the numbers
$$b_k(x_k-x_{k-1}),\quad k=2,\ldots, n-1$$
are linearly independent over $\QQ$. 
Using \eqref{indentity.bv} we get that
$$\var(\log (a))=4 \sum _{k=1}^{n-1}\arctanh |d_k|=4 \alpha.$$
We now show how to  choose the sequence $\{\eps_k\}_{k=1}^{n-1}$ such that $a\in [1/2,2]$.
Observe that for $x\in (x_{k-1},x_k)$, the coefficient $a$ is given by
$$a(x)=b_k^{-2}=\exp \Big(2\sum _{j=1}^{k-1} \arctanh d_k\Big) =
\exp \Big(2\sum _{j=1}^{k-1} \eps_k \arctanh |d_k|\Big). $$
Since  $0\leq \arctanh |d_k|\leq (\log 2)/2$ we always can choose $\eps_k\in \{\pm\}$ such that 
$$\Big|\sum _{j=1}^{k-1} \eps_j \arctanh |d_j|\Big|\leq \frac{\log 2}2, \quad \forall \  k=2,\ldots, n-1.$$
Indeed, for $k=2$ choose $\eps_1=1$. Assume that the above inequality is true for some $k$. If $\sum _{j=1}^{k-1} \eps_j \arctanh |d_j|$ belongs to $[0,\log(2)/2]$ then choose $\eps_k=-1$, otherwise $\eps_k=1$. It follows that the above inequality will also hold for $k+1$. With this choice of $\{\eps_k\}_{k=1}^{n-1}$
we obtain that  coefficient $a$ satisfies $a\in [1/2,2]$.

%
%

Denoting $m=1/2$, in view of estimate \eqref{est.tilde} we have
\begin{align*}
\frac{\displaystyle\sup_{x\in \rr}\int_{\rr} |v(t,x)|dt}{\|v_0\|_{L^1(\rr)}}\geq 
\frac{\displaystyle\int _{\rr} |i\omega c_{2n}(i\omega) ^\vee (t)|dt}{\|v_0\|_{L^1(\rr)}}-m^{-2}\gtrsim 
\frac{\displaystyle \|(Q_n(i\omega) \widehat v_0 )^\vee)\|_{L^1(\rr)}}{\|v_0\|_{L^1(\rr)}}-m^{-2}.
\end{align*} 
Let us now a sequence $v_{0k}$ such that  $$\frac{\displaystyle \|(Q_n(i\omega) \widehat v_{0k} )^\vee)\|_{L^1(\rr)}}{\|v_{0k}\|_{L^1(\rr)}}\rightarrow \|Q_n\|_{AP}.$$
This is always possible since $Q_n(i\omega)$ is an $L^1(\rr)$- Fourier multiplier and its norm is given by $\|Q_n\|_{AP}$, see \cite[Th. 2.5.8, p. 141]{MR2445437}. Thus
we get for any $N$ large enough that 
$$
\displaystyle \sup_{v_0\in L^1(\rr)}\frac{\displaystyle\sup_{x\in \rr}\int_{\rr} |v(t,x)|dt}{\|v_0\|_{L^1(\rr)}}\gtrsim
\|Q_n\|_{AP}-m^{-2}\geq 2N-4\geq N.$$
For the function $a$ chosen above  we obtain that the left hand side of the above inequality can be arbitrarily large. The proof if now finished.
\end{proof}

\begin{proof}[Proof of Theorem \ref{disp.sch}]
We use formula \eqref{res} for the resolvent to find that for $x\in (x_{k-1},x_k)$  solution $u$ of system \eqref{sch} is given by
\begin{align*}
u(t,x)=&\frac 1{i\pi}\int _{\rr}e^{-it\omega^2} \omega c_{2k-1}(i\omega)e^{i\omega b_kx}d\omega+\frac 1{i\pi}\int _{\rr}e^{-it\omega^2} \omega c_{2k}(i\omega)e^{-i\omega b_kx}d\omega\\
&+\frac{b_k}{2i \pi}\int _{\rr} e^{-it\omega^2} \omega \int _{I_k}u_0(y)e^{-i\omega_k|x-y|}dyd\omega.
\end{align*}
It follows that
$$|u(t,x)|\leq \frac {C}{\sqrt t}\int _{\rr} \big ( |(i\omega c_{2k-1}(i\omega))^\vee (x) |+  |(i\omega c_{2k}(i\omega))^\vee (x) |\big)dx+\frac {C}{\sqrt t}\|u_0\|_{L^1(\rr)}.$$
Using now estimate \eqref{main.ineq} that has already been proved in the proof of Theorem \ref{disp.wave} we obtain the desired estimate and the proof is finished.
\end{proof}

\section{Proof of the two technical lemmas}\label{proof.lemmas}
In this section we prove Lemma \ref{imp} and Lemma \ref{negative} using a fine analysis of the sequence $\{Q_n\}_{n\geq 1}$ defined by \eqref{def.q} by means of multi-variable series.

\subsection{The functions $\a (q)$, $\b (d)$, $\c (t)$,
$E(d_1,\ldots,d_n;q_1,\ldots,q_{n-1})$ and
$R(t_1,\ldots,t_n;q_1,\ldots,q_{n-1})$.}

We introduce the  notations that will be used in this section.
We denote by $\a (q)$ and $\b (\sigma)$ the mappings:
$$\a(q): z\mapsto qz, \quad \b(\sigma): z\mapsto\frac{z+\sigma}{1+\sigma z}.$$
Note that $q\mapsto\a (q)$ is a multiplicative mapping, i.e. $\a (q)\a
(q')=\a (qq')$, and $\beta$ satisfies
 $$\b (\sigma)\b (\sigma')=\b \Big(\frac{\sigma+\sigma'}{1+\sigma \sigma'}\Big).$$
  From the formula 
  $$\tanh
(x+y)=\frac{\tanh x+\tanh y}{1+\tanh x\tanh y}$$ we deduce that 
mapping $t\mapsto\c (t):=\b(\tanh t)$ has the property 
$$\c (a)\c (b)=\c (a+b), \quad a,b\in \rr.$$





If $d=(d_1,\ldots,d_n)$, with $0\leq d_i<1$ and $q=(q_1,\ldots,q_{n-1})$
is a multivariable then we define the multivariable series
$$E(d;q):=\left(\b (d_n)\,\a (q_{n-1})\,\b (d_{n-1})\cdots\a (q_1)\b
(d_1)\right) (0).$$
With the convention that for 
$j=(j_1,\ldots,j_{n-1})$ we write $q^j:=q_1^{j_1}\cdots
q_{n-1}^{j_{n-1}}$. We introduce  the norm $||\cdot||$ of a multivariable series  by 
$$\Big\|\sum_{j\in\NN^{n-1}}a_jq^j\Big\|=\sum_{j\in \NN^{n-1}}|a_j|.$$ 
It is easy to see that for any two series $\|ab\|\leq \|a\| \|b\|$.




\begin{lemma}\label{ineg.q.E}
Let be $\{Q_k\}_{k\geq 1}$  defined by \eqref{def.q}. Then for any $n\geq 1$
\begin{equation}\label{QleqE}
||Q_{n+1}||_{AP}\leq
||E(|d_1|,\ldots,|d_n|;q_1,\ldots, q_{n-1})||,
\end{equation}
with equality when $c_1,\ldots,c_{n-1}$
are linearly independent over $\QQ$. 
\end{lemma}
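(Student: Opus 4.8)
The goal is to relate the $AP$-norm of $Q_{n+1}$ — which is a function of the single variable $\omega$ built by composing the maps $\c(t)$ (rotations of the disc) and multiplications $\a(e^{ic_j\omega})$ — to the norm of the formal multivariable series $E$ obtained by treating each $e^{ic_j\omega}$ as an independent formal variable $q_j$. The idea is that the recursion \eqref{def.q} is exactly the specialization $q_j = e^{ic_j\omega}$ of the composition $\b(d_n)\,\a(q_{n-1})\,\b(d_{n-1})\cdots\a(q_1)\b(d_1)$ evaluated at $0$. First I would record that, with $d_j$ possibly negative, $\b(d_j) = \c(\arctanh d_j)$ and the chain $\b(d_n)\a(q_{n-1})\cdots\a(q_1)\b(d_1)$ evaluated at $0$, after the substitution $q_j=e^{ic_j\omega}$, coincides with $Q_{n+1}(i\omega)$; this is just unwinding \eqref{def.q}. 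So $Q_{n+1}(i\omega) = E(d;q)\big|_{q_j = e^{ic_j\omega}}$ where here $d=(d_1,\dots,d_n)$ with signs.

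**Key steps.** (1) Show $E(d;q)$ is a genuine convergent power series in $q_1,\dots,q_{n-1}$ with $\|E(d;q)\| < \infty$: each $\b(\sigma)$ with $|\sigma|<1$ maps the closed unit disc into itself, and the Taylor coefficients of $z\mapsto (z+\sigma)/(1+\sigma z)$ around any point are controlled; composing finitely many such Möbius maps with the monomial multiplications $\a(q_j)$ and evaluating the formal composition at $0$ produces a series whose coefficient sum is finite — in fact this finiteness is what Lemma \ref{imp} will quantify, but here we only need $\|E\|<\infty$, which follows from $|d_j|\le d<1$. (2) Observe the sign reduction: since $|\c(t)| $ and the coefficient-wise estimates only see $|\tanh t| = \tanh|t|$, replacing each $d_j$ by $|d_j|$ can only increase (coefficientwise, in absolute value) the series, giving $\|E((d_1,\dots,d_n);q)\| \le \|E((|d_1|,\dots,|d_n|);q)\|$. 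More precisely, flipping the sign of $d_j$ corresponds to replacing $q_j$ by $-q_j$ in a reparametrized series, which does not change $\|\cdot\|$; so in fact one gets equality of norms, and it is cleanest to just prove $\|E(d;q)\|=\|E((|d_1|,\dots,|d_n|);q)\|$. (3) The inequality $\|Q_{n+1}\|_{AP}\le \|E(d;q)\|$: writing $E(d;q) = \sum_{j\in\NN^{n-1}} a_j q^j$, substituting $q_j = e^{ic_j\omega}$ gives $Q_{n+1}(i\omega) = \sum_j a_j e^{i(j_1c_1+\dots+j_{n-1}c_{n-1})\omega}$, which is an $AP$ representation, so $\|Q_{n+1}\|_{AP}\le \sum_j|a_j| = \|E(d;q)\|$. (4) Equality when $c_1,\dots,c_{n-1}$ are linearly independent over $\QQ$: then the exponents $j_1c_1+\dots+j_{n-1}c_{n-1}$ for distinct $j\in\NN^{n-1}$ are all distinct, so no cancellation occurs between different monomials, the displayed $AP$ representation is the reduced one, and $\|Q_{n+1}\|_{AP} = \sum_j |a_j|$. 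Combining (2)-(4) gives \eqref{QleqE} with equality under rational independence.

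**Main obstacle.** The one genuinely delicate point is step (1) — that $E(d;q)$ is well-defined as a convergent multivariable series with finite $\|\cdot\|$-norm, and that the formal substitution $q_j\mapsto e^{ic_j\omega}$ is legitimate (i.e. the resulting exponential series converges absolutely to the actual value $Q_{n+1}(i\omega)$ computed by the finite recursion). This requires knowing that each Möbius factor $\b(\sigma)$ with $|\sigma|\le d<1$ has a Taylor expansion about $0$ (and, after composition, about the point that the preceding maps land on) with geometrically decaying coefficients, uniformly enough that the finite composition's formal expansion has summable coefficients; one uses that $\b(\sigma)$ maps $\{|z|\le r\}$ into $\{|z|\le (r+d)/(1+dr)\} \subsetneq \{|z|<1\}$ and that each $\a(q_j)$ is norm-nonincreasing on the formal level. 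The bookkeeping of which disc each intermediate composition maps into — to guarantee we never approach the boundary where $\b$ has a pole — is the technical heart; once that is set up, steps (2)-(4) are essentially formal manipulations with the norm $\|\cdot\|$ and the multiplicativity $\|ab\|\le\|a\|\,\|b\|$ already noted in the excerpt.
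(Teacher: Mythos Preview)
Your steps (2)--(4) are essentially the paper's argument, and they are correct. The paper packages (2) slightly more cleanly: writing $d_k=\e_k|d_k|$ and using the identity $\b(-d_k)=\a(-\e_k)\,\b(|d_k|)\,\a(-\e_k)$, one gets directly
\[
Q_{n+1}(i\omega)=-\e_n e^{i\omega c_n}\,E\big(|d_1|,\dots,|d_n|;\ \e_1\e_2 e^{i\omega c_1},\dots,\e_{n-1}\e_n e^{i\omega c_{n-1}}\big),
\]
so the sign reduction and the specialization happen in one stroke. (Note two small slips in your write-up: the recursion \eqref{def.q} involves $\b(-d_k)$, not $\b(d_k)$, and there is an extra outermost factor $\a(e^{ic_n\omega})$; both are harmless for the $AP$-norm. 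Also, flipping the sign of a single $d_j$ flips \emph{two} adjacent $q$-variables, not one --- still norm-preserving.)

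Where you go astray is step (1), which you call the ``main obstacle''. The claim that $\|E\|<\infty$ follows merely from $|d_j|\le d<1$ is false: Theorem~\ref{f} shows $\|E\|=\|R(t)\|$ can be made arbitrarily large, and the paper \emph{uses} Lemma~\ref{ineg.q.E} (in the proof of Lemma~\ref{negative}) precisely when $\sum_k\arctanh|d_k|\ge\pi/2$, i.e.\ outside the regime where finiteness is guaranteed. More importantly, finiteness is not needed at all: the inequality \eqref{QleqE} is vacuous when $\|E\|=\infty$, and for the equality case one argues (as you do in (4)) that under rational independence of $c_1,\dots,c_{n-1}$ the frequencies $j_1c_1+\cdots+j_{n-1}c_{n-1}$ are pairwise distinct, so the Bohr--Fourier coefficients of $Q_{n+1}$ are, up to unimodular factors, exactly the Taylor coefficients $a_j$ of $E$ --- hence $\|Q_{n+1}\|_{AP}=\sum_j|a_j|=\|E\|$ whether this is finite or not. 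The lemma is therefore almost a tautology once the displayed identity above is written down; there is no analytic obstacle of the kind you describe.
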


\begin{proof}
We write $d_k=\e_k|d_k|$ with $\e_k=\pm 1$. The sequence
$\{Q_k\}_{k\geq 1}$ can be written as 
\begin{align*}
Q_k(i\omega)&=-\e_{k-1}e^{i\omega
 c_{k-1}}\frac{-\e_{k-1}Q_{k-1}(i\omega
)+|d_{k-1}|}{1-\e_{k-1}|d_{k-1}|Q_{k-1}(i\omega
 )}\\
&=\left(\a (-\e_{k-1}e^{i\omega
 c_{k-1}})\,\b (|d_{k-1}|)\,\a
(-\e_{k-1})\right)(Q_{k-1}(i\omega
 )), k\geq 2.
\end{align*}
It follows that
\begin{multline*}
Q_{n+1}(i\omega )
=\Big(\a (-\e_ne^{i\omega c_n})\,\b (|d_n|)\,\a (\e_{n-1}\e_ne^{i\omega
c_{n-1}})\,\b (|d_{n-1}|)\cdots\a (\e_1\e_2e^{i\omega c_1})\,\b
(|d_1|)\Big) (0).
\end{multline*}
With the above notations it follows that
$$Q_{n+1}(i\omega )=-\e_ne^{i\omega c_n}\,
E(|d_1|,\ldots,|d_n|;\e_1\e_2e^{i\omega c_1},\ldots,\e_{n-1}\e_ne^{i\omega
c_{n-1}}).$$
 Since $|-\e_n|=1$ and $|\e_k\e_{k+1}|=1$ for $1\leq k\leq n-1$ we have 
\begin{equation*}
||Q_{n+1}||_{AP}\leq
||E(|d_1|,\ldots,|d_n|; q_1,\ldots, q_{n-1})||,
\end{equation*}
with equality when $c_1,\ldots,c_{n-1}$
are linearly independent over $\QQ$. 
\end{proof}

%
%

In the following we will estimate in a clever way the norm of $E(|d_1|,\ldots,|d_n|; q_1,\ldots, q_{n-1})$.
%
For any  $t=(t_1,\ldots,t_n)$  we define the series
$R(t;q)$ in  the multivariable $q=(q_1,\ldots,q_{n-1})$ by
\begin{align}\label{formula.R}
R(t;q)&=\left(\c (t_n)\,\a (q_{n-1})\,\c (t_{n-1})\cdots\a (q_1)\c
(t_1)\right) (0)\\
\nonumber&=E(\tanh t_1,\ldots, \tanh t_n; q).
\end{align}
When there is no risk of confusion we will write only $R(t)$.

The fact that $\tanh$ gives a bijection between $[0,\infty )$ and
$[0,1)$ allows us to find estimates for
$||E(d_1,...,d_n;q_1,...,q_{n-1})||$ in terms of estimates for
$||R(t_1,...,t_n;q_1,...,q_{n-1})||$. Dealing with $R$ rather than
$E$ is more advantageous because it allows us to use the formula
$\gamma (a)\gamma (b)=\gamma (a+b)$. 

\subsection{Estimates on partitions.}

For any positive number  $x$ we consider the set of all partitions of $x$:
$$A_x=\Big\{ t=(t_1,\ldots,t_n)\mid
t_j>0,\,\sum_{j=1}^nt_j=x\Big\}.$$ 
If $t=(t_1,\ldots,t_n)\in A_x$  the sequence of the partial sums
$x_k=\sum_{j=1}^kt_j$, $k=0,\ldots, n$ is a partition of the interval
$[0,x]$, i.e. we have $0=x_0<x_1<\ldots <x_n=x$. We get a one-to-one
correspondence between $A_x$ and the partitions of $[0,x]$. 

If $t=(t_1,\ldots,t_n)$ and $s=(s_1,\ldots,s_m)$ are two elements in
$A_x$ we say that $s$ is
finer than $t$ and we write $t\prec s$ if the sequence of partial sums
of $s$ contains the sequence of partial sums of $t$, i.e. there is a
sequence $1\leq k_1<\ldots <k_n=m$ such that
$$\sum_{j=1}^lt_j=\sum_{j=1}^{k_l}s_j, \quad \forall\, 1\leq l\leq n.$$
It follows that $(A_x,\prec )$ is a directed set.

\begin{lemma} \label{expansion.4}
Let be $0\leq d_j\leq 1$, $1\leq j\leq n$ and  $P=\prod_{j=1}^n (1+d_j)$,
$p=\prod _{j=1}^n (1-d_j)$. Then the map
$$\b (d_n)\a (q_{n-1})\cdots \b (d_1)\a (q_0)$$ is
given by the map $z\mapsto (az+b)/(cz+d)$, where $a,b,c,d\in\RR
[q_0,\ldots,q_{n-1}]$ have non-negative coefficients,
$$||a||=||c||=\frac 12(P+p), ||b||=||d||=\frac 12(P-p),$$  the
coefficient of $q_0\cdots q_{n-1}$ in polynomial $a$  
 and the constant term in polynomial $d$ is $1$.
%
\end{lemma}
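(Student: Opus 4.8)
The plan is to proceed by induction on $n$, exploiting the fact that composition of Möbius-type maps corresponds to multiplication of $2\times 2$ matrices, and then tracking how the claimed norms of the polynomial entries evolve under this matrix product. First I would record the base case $n=0$: the map is $z\mapsto\a(q_0)(z)=q_0 z$, which is $(az+b)/(cz+d)$ with $a=q_0$, $b=0$, $c=0$, $d=1$; here $P=p=1$ (empty product), so $\|a\|=\|c\|=1=\tfrac12(P+p)$ and $\|b\|=\|d\|=0=\tfrac12(P-p)$, and the stated normalizations (coefficient of $q_0$ in $a$ equals $1$, constant term of $d$ equals $1$) hold. Actually it is cleaner to take $n=0$ to mean the map $\a(q_0)$ alone, or even to start the induction from the identity map; I would pick whichever makes the bookkeeping of the "first order power $q_0\cdots q_{n-1}$" cleanest.

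For the inductive step, suppose the map $\b(d_{n-1})\a(q_{n-2})\cdots\b(d_1)\a(q_0)$ is represented by $z\mapsto(a'z+b')/(c'z+d')$ with the asserted properties, with $P'=\prod_{j=1}^{n-1}(1+d_j)$, $p'=\prod_{j=1}^{n-1}(1-d_j)$. Then the full map is obtained by precomposing with $\a(q_{n-1})$ and postcomposing with $\b(d_n)$. In matrix form, $\a(q_{n-1})\leftrightarrow\begin{pmatrix}q_{n-1}&0\\0&1\end{pmatrix}$ and $\b(d_n)\leftrightarrow\begin{pmatrix}1&d_n\\d_n&1\end{pmatrix}$, so the new coefficient matrix is
$$
\begin{pmatrix}1&d_n\\d_n&1\end{pmatrix}
\begin{pmatrix}a'&b'\\c'&d'\end{pmatrix}
\begin{pmatrix}q_{n-1}&0\\0&1\end{pmatrix}
=\begin{pmatrix}(a'+d_nc')q_{n-1}&b'+d_nd'\\(d_na'+c')q_{n-1}&d_nb'+d'\end{pmatrix}.
$$
Reading off $a=(a'+d_nc')q_{n-1}$, $b=b'+d_nd'$, $c=(d_na'+c')q_{n-1}$, $d=d_nb'+d'$. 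Since all of $a',b',c',d'$ have non-negative coefficients and $0\le d_n\le1$, so do $a,b,c,d$, and the $\|\cdot\|$ norm is additive on sums of polynomials with non-negative coefficients and multiplicative under multiplication by the monomial $q_{n-1}$; hence $\|a\|=\|a'\|+d_n\|c'\|=\tfrac12(P'+p')+\tfrac{d_n}{2}(P'+p')=\tfrac12(1+d_n)(P'+p')$. This equals $\tfrac12(P+p)$ precisely because $P+p=(1+d_n)P'+(1-d_n)p'$ — wait, that is not $(1+d_n)(P'+p')$ in general; so the correct computation is $\|a\|=\|a'\|+d_n\|c'\|$ with $\|a'\|=\|c'\|=\tfrac12(P'+p')$, giving $\|a\|=\tfrac12(1+d_n)(P'+p')$, and one checks $\tfrac12(1+d_n)(P'+p')=\tfrac12(P+p)$ using $P=(1+d_n)P'$, $p=(1-d_n)p'$ only if $(1+d_n)(P'+p')=(1+d_n)P'+(1-d_n)p'$, i.e. only if $d_n p'=-d_np'$. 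This forces me to be more careful: the resolution is that the norms of $a'$ and $c'$ are \emph{equal}, and likewise one must use $\|b'\|=\|d'\|$, so that $\|a\|=\|a'\|+d_n\|c'\|$ and $\|c\|=d_n\|a'\|+\|c'\|$ are equal to each other, and their common value should be reconciled with $\tfrac12(P+p)$ by instead carrying the sharper inductive hypotheses $\|a'+c'\|=P'$ and, after multiplying by the monomial, using that $a'$ and $c'$ have non-negative coefficients so $\|a'+c'\|=\|a'\|+\|c'\|$; then $\|a+c\|=(1+d_n)\|(a'+c')q_{n-1}\| $...

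I should therefore reorganize the induction to carry the four quantities $\|a+c\|$, $\|a-c\|$ (meaningfully, since $a-c$ need not have non-negative coefficients, better: $\|a\|+\|c\|$ and the relation $\|a\|=\|c\|$) — concretely, the clean inductive invariants are: (i) $a,b,c,d$ have non-negative coefficients; (ii) $a+c$, as a polynomial, has all coefficients summing to $P$, and $b+d$ has coefficient-sum $P$ as well; (iii) there is a symmetry swapping $a\leftrightarrow c$ and $b\leftrightarrow d$ that corresponds to $q_j\mapsto q_j$ and reflects the $\pm$ structure, forcing $\|a\|=\|c\|$ and $\|b\|=\|d\|$; and (iv) the $\tfrac12(P\pm p)$ values then follow by also tracking the "alternating-sign" evaluation at $q_j=-1$, which turns $\a(q_j)$ into $-\mathrm{id}$ and collapses the product to $\pm\b(d_n)\b(-d_{n-1})\cdots$ with composite parameter $\tanh(\sum\pm\,\mathrm{arctanh}\,d_j)$ — but since all $d_j\ge0$ the relevant extreme is $\prod(1-d_j)/\prod(1+d_j)=p/P$. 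The cleanest route is: evaluate the Möbius map at $z=1$ and $z=-1$ — actually evaluating the \emph{matrix} identity at $q_0=\cdots=q_{n-1}=1$ gives the matrix $\prod\begin{pmatrix}1&d_j\\d_j&1\end{pmatrix}=\begin{pmatrix}(P+p)/2&(P-p)/2\\(P-p)/2&(P+p)/2\end{pmatrix}$, whose entries are exactly the coefficient-sums $\|a\|,\|b\|,\|c\|,\|d\|$ because all coefficients are non-negative. This last observation is the slick way to finish: non-negativity of coefficients (proved by the matrix induction above) means $\|a\|=a(1,\ldots,1)$ etc., and $a(1,\ldots,1),\ldots,d(1,\ldots,1)$ are read off from the scalar matrix product, which telescopes because each factor $\begin{pmatrix}1&d_j\\d_j&1\end{pmatrix}$ has eigenvectors $(1,1)$ and $(1,-1)$ with eigenvalues $1+d_j$ and $1-d_j$. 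Finally the normalization claims follow by tracking the single monomial $q_0q_1\cdots q_{n-1}$: in the matrix recursion the top-left entry picks up exactly one factor of $q_{n-1}$ from each step with coefficient inherited from $a'$, whose top monomial has coefficient $1$ by induction; and the constant term of $d$ comes only from the chain $d_nb'+d'$ where $b'$ has zero constant term (it is divisible by $q_0$) so the constant term of $d$ equals that of $d'$, which is $1$.

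The main obstacle is exactly the bookkeeping pitfall flagged above: naively adding norms fails because $\tfrac12(1+d_n)(P'+p')\ne\tfrac12(P+p)$, so one must resist computing $\|a\|$ directly and instead either (a) carry the joint invariant "$a,b,c,d$ have non-negative coefficients and $\begin{pmatrix}\|a\|&\|b\|\\\|c\|&\|d\|\end{pmatrix}=\prod_{j=1}^n\begin{pmatrix}1&d_j\\d_j&1\end{pmatrix}$" through the induction, or (b) prove non-negativity of coefficients first as a standalone induction and then evaluate at the all-ones point. Once that is set up, the eigenvalue diagonalization of the symmetric $2\times 2$ factors makes the product, hence the $\tfrac12(P\pm p)$ formulas, immediate, and the normalization statements are a one-line monomial-chasing argument. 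I expect the total write-up to be short.
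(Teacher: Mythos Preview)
Your eventual strategy---establish non-negativity of the matrix entries and then evaluate at $q_0=\cdots=q_{n-1}=1$, where the product collapses to $\prod_j\begin{pmatrix}1&d_j\\d_j&1\end{pmatrix}$ and the eigenvectors $(1,\pm 1)$ yield $\tfrac12(P\pm p)$---is exactly the paper's. The paper expands $\prod_j(I+d_jA)$ using $A^2=I$ rather than diagonalizing, but that is the same computation.

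There is, however, a concrete slip in your matrix recursion. Both $\a(q_{n-1})$ and $\b(d_n)$ are applied \emph{after} the map represented by $M'$, so both matrices multiply on the left:
\[
M_n=\begin{pmatrix}1&d_n\\d_n&1\end{pmatrix}\begin{pmatrix}q_{n-1}&0\\0&1\end{pmatrix}M',
\]
not $\begin{pmatrix}1&d_n\\d_n&1\end{pmatrix}M'\begin{pmatrix}q_{n-1}&0\\0&1\end{pmatrix}$ as you wrote. With the correct order one obtains $a=q_{n-1}a'+d_nc'$ and $d=d_nq_{n-1}b'+d'$. This does not affect the all-ones evaluation (the $\a$-matrices become the identity there), but it does break your normalization argument: you assert that $b'$ is divisible by $q_0$, which is false already at the second step, where $b'=d_1$. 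With the corrected recursion the argument is immediate: the constant term of $d$ equals that of $d'$ because $q_{n-1}b'$ has no constant term, and the coefficient of $q_0\cdots q_{n-1}$ in $a$ equals the coefficient of $q_0\cdots q_{n-2}$ in $a'$ because $c'$ does not involve $q_{n-1}$. The paper instead extracts these two coefficients directly, writing $M=\sum_{k\in\{0,1\}^n}q^kM_k$ and computing $M_{0,\ldots,0}$ and $M_{1,\ldots,1}$ in closed form via the identities $B_i(I+d_jA)B_i=B_i$; your inductive route, once the order is fixed, is an equally valid alternative.
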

\begin{proof} Note that 
$$\frac 12(P+p)=\sum_{k\text{ even}}\sum_{1\leq
i_1<\ldots<i_k\leq n}d_{i_1}\cdots d_{i_k}$$ and
 $$\frac
12(P-p)=\sum_{k\text{ odd}}\sum_{1\leq
i_1<\ldots<i_k\leq n}d_{i_1}\cdots d_{i_k}.$$

Let $M(q_0,\ldots,q_{n-1})$ be the matrix
$$M(q_0,\ldots,q_{n-1})=\begin{pmatrix}a&b\\ c&d\end{pmatrix}=
\begin{pmatrix}1&d_n\\ d_n&1\end{pmatrix}
\begin{pmatrix}q_{n-1}&0\\ 0&1\end{pmatrix} 
\cdots\begin{pmatrix}1&d_1\\ d_1&1\end{pmatrix}
\begin{pmatrix}q_0&0\\ 0&1\end{pmatrix}.$$
Thus the map $\b (d_n)\a (q_{n-1})\cdots\b (d_1)\a (q_0)$ is
given by $$z\mapsto\frac{az+b}{cz+d}.$$
Since $d_j$, $j\geq 1$ are nonnegative, $a,b,c,d\in R[q_0,\ldots,q_{n-1}]$ are
polynomials with non-negative coefficients. Therefore
$||a||=a(1,\ldots,1)$ and similarly for $b,c,d$.

Denoting  $$A=\left(\begin{matrix}0&1\\ 1&0\end{matrix}\right),
B_0=\left(\begin{matrix}0&0\\ 0&1\end{matrix}\right),
B_1=\left(\begin{matrix}1&0\\ 0&0\end{matrix}\right)$$
 we have that
 \begin{equation}\label{M}
M(q_0,\ldots,q_{n-1})=(I+d_nA)(B_0+q_{n-1}B_1)\cdots (I+d_1A)(B_0+q_0B_1).
\end{equation}
Since $B_0+B_1=I$ and $A^2=I$, it follows that
\begin{align*}
M(1,\ldots,1)&=(I+d_nA)\cdots (I+d_1A)=\sum_{k=0}^n\sum_{1\leq
i_1<\ldots<i_k\leq n}d_{i_1}\cdots d_{i_k}A^k\\
 &=\left(\sum_{k\text{ even}}\sum_{1\leq
i_1<\ldots<i_k\leq n}d_{i_1}\cdots d_{i_k}\right)
I+\left(\sum_{k\text{ odd}}\sum_{1\leq
i_1<\ldots<i_k\leq n}d_{i_1}\cdots d_{i_k}\right)A\\
&=\frac 12(P+p)I+\frac
12(P-p)A=\begin{pmatrix}\frac 12(P+p)&\frac 12(P-p)\\[10pt]
 \frac 12(P-p)&\frac
12(P+p)\end{pmatrix}.
\end{align*}
It implies  that $$a(1,\ldots,1)=c(1,\ldots,1)=\frac 12(P+p)$$ and
$$b(1,\ldots,1)=d(1,\ldots,1)=\frac 12(P-p).$$
Then the first part of Lemma \ref{expansion.4} is proved. 
 
 Let us now compute the coefficient of $q_0\cdots q_{n-1}$ in
 polynomial $a$ and the constant term in polynomial $d$.
 In view of \eqref{M} 
we have
$$M(q_0,\ldots,q_{n-1})=\sum_{(k_0,\ldots,k_{n-1})\in\{ 0,1\}^n}q_0^{k_0}\cdots
q_{n-1}^{k_{n-1}}M_{k_0,\ldots,k_{n-1}},$$
where
$$M_{k_0,\ldots,k_{n-1}}=(I+d_nA)B_{k_{n-1}}\cdots (I+d_1A)B_{k_0}.$$
To determine the coefficient of $q_0\cdots q_{n-1}$ and the
constant term of $d$ one has to calculate $M_{1,\ldots,1}$ and
$M_{0,\ldots,0}$, respectively. 

Note that $B_0^2=B_0$, $B_1^2=B_1$ and $B_0AB_0=B_1AB_1=0$ so
$B_0(I+d_jA)B_0=B_0$ and
$B_1(I+d_jA)B_1=B_1$. 
Hence
$$M_{0.\ldots,0}=(I+d_nA)B_0\cdots
(I+d_1A)B_0=(I+d_nA)B_0=\begin{pmatrix}0&d_n\\ 0&1\end{pmatrix}$$
so the constant term of $d$ is one. Also 
$$M_{1.\ldots,1}=(I+d_nA)B_1\cdots
(I+d_1A)B_1=(I+d_nA)B_1=\begin{pmatrix}1&0\\ d_n&0\end{pmatrix}$$
so the coefficient of $q_1\cdots q_{n-1}$ in $a$ is one.
%
 \end{proof} 

\begin{corollary} \label{cor.5}
Let $(t_1,\ldots, t_n)\in A_x$. The map
$$\c
(t_n)\a (q_{n-1})\cdots\c (t_1)\a (q_0)$$ is given by
$z\mapsto (az+b)/(cz+d)$, where $a,b,c,d\in\RR
[q_0,\ldots,q_{n-1}]$ have non-negative coefficients,
$$||a||=||d||< \cosh x,\,||b||=||c||< \sinh x,$$
the coefficient of $q_0\cdots q_{n-1}$ in $a$ and the constant
term of $d$ is $1$.
%
\end{corollary}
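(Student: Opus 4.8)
The plan is to obtain this corollary as an immediate specialization of Lemma \ref{expansion.4}. Recall that $\c (t)=\b (\tanh t)$, so for any $(t_1,\dots,t_n)\in A_x$ we may write
$$\c (t_n)\,\a (q_{n-1})\,\c (t_{n-1})\cdots\c (t_1)\,\a (q_0)=\b (\tanh t_n)\,\a (q_{n-1})\cdots\b (\tanh t_1)\,\a (q_0).$$
Since each $t_j>0$ we have $0<\tanh t_j<1$, so Lemma \ref{expansion.4} applies with $d_j=\tanh t_j$: the composite is the map $z\mapsto (az+b)/(cz+d)$ with $a,b,c,d\in\RR[q_0,\dots,q_{n-1}]$ having non-negative coefficients, with $||a||=||d||=\tfrac12(P+p)$, $||b||=||c||=\tfrac12(P-p)$, and with the coefficient of $q_0\cdots q_{n-1}$ in $a$ and the constant term of $d$ both equal to $1$, where $P=\prod_{j=1}^n(1+\tanh t_j)$ and $p=\prod_{j=1}^n(1-\tanh t_j)$. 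The two assertions about the distinguished coefficients concern only the combinatorial structure of the matrix product in the proof of Lemma \ref{expansion.4} and not the numerical values $\tanh t_j$, so they carry over verbatim.

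First I would put $P$ and $p$ into hyperbolic form. From the elementary identities $1+\tanh t=e^{t}/\cosh t$ and $1-\tanh t=e^{-t}/\cosh t$, together with $\sum_{j=1}^n t_j=x$, one obtains
$$P=\prod_{j=1}^n(1+\tanh t_j)=\frac{e^{x}}{\prod_{j=1}^n\cosh t_j},\qquad p=\prod_{j=1}^n(1-\tanh t_j)=\frac{e^{-x}}{\prod_{j=1}^n\cosh t_j},$$
and therefore
$$\tfrac12(P+p)=\frac{\cosh x}{\prod_{j=1}^n\cosh t_j},\qquad \tfrac12(P-p)=\frac{\sinh x}{\prod_{j=1}^n\cosh t_j}.$$
Since every $t_j>0$ we have $\cosh t_j>1$, hence $\prod_{j=1}^n\cosh t_j>1$; and since $x>0$ we also have $\sinh x>0$. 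Dividing the positive quantities $\cosh x$ and $\sinh x$ by a number strictly larger than $1$ strictly decreases them, which yields $||a||=||d||<\cosh x$ and $||b||=||c||<\sinh x$ and finishes the argument.

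I do not expect a genuine obstacle here: the corollary is a direct translation of Lemma \ref{expansion.4} under the substitution $d_j=\tanh t_j$. The only steps deserving a line of care are the two identities $1\pm\tanh t=e^{\pm t}/\cosh t$, which are what collapse the products $P$ and $p$ into $\cosh x$ and $\sinh x$ over the common denominator $\prod_{j}\cosh t_j$, and the observation that membership of $(t_1,\dots,t_n)$ in $A_x$ forces every $t_j$ to be strictly positive. It is precisely this strict positivity that upgrades the equalities of Lemma \ref{expansion.4} to the strict inequalities claimed here, and that guarantees $\sinh x>0$ so that the bound $||b||=||c||<\sinh x$ is not vacuous.
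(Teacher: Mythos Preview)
Your proof is correct and follows essentially the same argument as the paper: apply Lemma~\ref{expansion.4} with $d_j=\tanh t_j$, use the identities $1\pm\tanh t=e^{\pm t}/\cosh t$ to write $P=e^x/\prod_j\cosh t_j$ and $p=e^{-x}/\prod_j\cosh t_j$, and then observe that $\prod_j\cosh t_j>1$ yields the strict inequalities. Your write-up is in fact slightly more explicit than the paper's in spelling out why the inequalities are strict.
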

\begin{proof} Since $\c (t)=\b (\tanh t)$ we can use Lemma \ref{expansion.4} with $d_j=\tanh
t_j$. Using the notations of Lemma \ref{expansion.4}, we must prove that 
\begin{equation}\label{est.Pp}
\frac{P+p}2<\cosh x \quad\text{and} \quad \frac {P-p}2<\sinh x.
\end{equation}
Since  $$1+d_j=1+\tanh t_j=\frac {e^{t_j}}{\cosh t_j} ,\quad 1-d_j=1-\tanh
t_j=\frac {e^{-t_j}}{\cosh t_j}$$
we get
$$P=e^x\left(\prod_j\cosh t_j\right)^{-1}<e^x, \,p=e^{-x}\left(\prod_j\cosh t_j\right)^{-1}<e^{-x}.$$ 
Thus \eqref{est.Pp} immediately follows. 
\end{proof} 

\begin{lemma} \label{R-creste}
For any two partitions
$s,t\in A_x$ with $t\prec s$ the following holds for any positive integer $r$
$$||R(t)^r||\leq||R(s)^r||.$$ 
\end{lemma}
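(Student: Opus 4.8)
The plan is to reduce the general case $t\prec s$ to the elementary case where $s$ refines $t$ by splitting a single block $t_i$ into two pieces $t_i', t_i''$ with $t_i'+t_i''=t_i$; a finite iteration of such one-step refinements connects any $t$ to any finer $s$, and since the inequality $\|R(t)^r\|\le\|R(s)^r\|$ would be transitive, it suffices to handle one step. So first I would fix notation: write $s$ as obtained from $t=(t_1,\dots,t_n)$ by replacing $t_i$ with the pair $(t_i',t_i'')$, so $s$ has $n+1$ entries and a corresponding multivariable $q'=(q_1,\dots,q_{i-1},q_i,q_i',q_{i+1},\dots)$ with one extra variable inserted at position $i$.

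The key identity is the multiplicative property $\c(t_i')\c(t_i'')=\c(t_i)$ from Subsection 4.1. Using this, $R(s)$ is built from the same chain of maps as $R(t)$ except that the single factor $\a(q_i)\c(t_i)$ is expanded into $\a(q_i)\c(t_i')\,\a(q_i')\c(t_i'')$ — i.e., a fresh variable $q_i'$ is inserted and $\c(t_i)$ is factored through. The next step is therefore to observe that $R(t)$ is recovered from $R(s)$ by the substitution $q_i'\mapsto 1$ (and keeping all other variables), because $\a(1)$ is the identity map and $\c(t_i')\c(t_i'')=\c(t_i)$ collapses the inserted factor. Since all the series involved have the property that setting a variable to $1$ can only decrease (or preserve) the $\|\cdot\|$-norm — as $\|\sum a_j q^j\|=\sum|a_j|$ and the specialization sums coefficients over the killed index, which by the triangle inequality is bounded by $\sum|a_j|$ — we would get $\|R(t)^r\|\le\|R(s)^r\|$ once we know the coefficients of $R(s)^r$ as a series in $q'$ are such that this specialization is exactly the substitution. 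Concretely: if $F(q')=R(s)^r$ then $R(t)^r=F(q_1,\dots,q_{i-1},q_i,1,q_{i+1},\dots)$, and $\|F(\dots,1,\dots)\|\le\|F\|$ always.

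The one subtlety to check carefully — and the step I expect to be the main obstacle — is the justification that $R(s)$ and $R(t)$ are genuinely related by this single substitution $q_i'=1$ at the level of the (infinite) multivariable power series, not merely formally: one must make sure the composition of Möbius-type maps $\b(\cdot),\a(\cdot),\c(\cdot)$ evaluated at $0$ expands into a convergent multivariable series (this is handled by Corollary \ref{cor.5}, which shows the relevant $a,b,c,d$ have controlled norms and hence $R=b/d$ expands with $\|R\|\le\tanh x<1$ under iteration), and that the substitution commutes with taking the $r$-th power. Both follow from the fact that substitution $q_i'\mapsto 1$ is a continuous algebra homomorphism on the Banach algebra of multivariable series with norm $\|\cdot\|$, so it commutes with products and with the limit defining the composed map; then $\|\varphi(F)\|\le\|F\|$ for this homomorphism $\varphi$ finishes the proof. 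Assembling: one-step refinement gives the inequality, transitivity upgrades it to all $t\prec s$.
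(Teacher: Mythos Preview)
Your approach is essentially the paper's: both arguments rest on the observation that, via $\c(a)\c(b)=\c(a+b)$ and $\a(1)=\mathrm{id}$, the series $R(t)$ is obtained from $R(s)$ by specializing the extra $q$-variables to $1$, after which the triangle inequality gives $\|R(t)^r\|\le\|R(s)^r\|$. The only cosmetic difference is that the paper performs the specialization of all extra variables at once rather than iterating elementary one-step refinements.

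One correction worth noting: your parenthetical claim that Corollary~\ref{cor.5} yields $\|R\|\le\tanh x<1$ is false --- Theorem~\ref{f} shows that $\sup_{t\in A_x}\|R(t)\|=\tan x$, which blows up at $x=\pi/2$. Fortunately this does not damage the argument: if $\|R(s)^r\|=\infty$ the desired inequality is vacuous, and if $\|R(s)^r\|<\infty$ then the specialization $q_i'\mapsto 1$ is the norm-decreasing algebra homomorphism you describe, so the Banach-algebra reasoning goes through. The identity $R(t)^r=R(s)^r|_{q_i'=1}$ itself needs no analytic justification, since both sides are rational functions in the $q$-variables and the equality holds already at that level.
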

\begin{proof}
 Let $t=(t_1,\ldots,t_n)\prec s=(s_1,\ldots,s_m)$ be two
  elements in $A_x$. 
 In order to prove this result we relate the two series $R(t)$ and $R(s)$. Observe that $R(t)$ and $R(s)$ depend 
on   $n-1$  and $m-1$ variables respectively,  $m\geq n$. 
  Since $t\prec s$
there are $0=k_0<k_1<\cdots <k_n=m$ such that
$$t_l=\sum_{j=k_{l-1}+1}^{k_l}s_j,\quad \forall 1\leq l\leq n.$$ 
Using that  $\gamma(a+b)=\gamma(a)\gamma(b)$ it follows
that $$\c (t_l)=\prod_{j=k_{l-1}+1}^{k_l}\c (s_j).$$
 Since $\a (1)=1$ we
also have $$\c (t_l)=\c (s_{k_{l-1}+1})\a (1)\c (s_{k_{l-1}+2})\a
(1)\cdots\a (1)\c (s_{k_l}).$$ 
Replacing $\c (t_l)$ by the above
formula  in the definition of $R(t)$, \eqref{formula.R},  one gets
\begin{align*}
R(t;q_1,\ldots,q_{n-1})&=\left(\c (t_n)\a (q_{n-1})\c (t_{n-1})\cdots\a
(q_1)\c (t_1)\right) (0)\\
&=R(s;1,\ldots,1,q_1,1,\ldots,1,q_2,\ldots,q_{n-1},1\ldots,1),
\end{align*}
where the blocks  of one above have
lengths  $k_n-k_{n-1}-1,\ldots, k_2-k_1-1, k_1-k_0-1$ .
Thus $||R(t)||\leq ||R(s)||$.
For $r\geq 2$ the argument is similar since 
$$R(t; q_1,\ldots,q_{n-1})^r=R(s;1,\ldots,1,q_1,\ldots,q_{n-1},1\ldots,1)^r.$$
The proof is finished.
 \end{proof}

\subsection{Upper bounds for $R(t)$}
We now obtain some properties of the multivariable series $R(t)$ introduced above.
For any integer $r\geq 0$ we define the function $f_r:(0,\infty )\to (0,\infty
]$ by 
$$f_r(x)=\sup_{t\in A_x}||R(t)^r||.$$
In particular, $f_0\equiv 1$.
We note that 
$f_{r_1+r_2}(x)\leq f_{r_1}(x)f_{r_2}(x).$ In particular for any integer $r\geq 1$,
$f_r(x)\leq f_1(x)^r.$

The first estimate for $f_1$ is given in the following lemma.
\begin{lemma}\label{first.estimate}
For any $x\in (0,\log (2+\sqrt 3))$ the following holds
\begin{equation}\label{est.f.1}
f_1(x)\leq \frac{\sinh x}{2-\cosh x}.
\end{equation}
\end{lemma}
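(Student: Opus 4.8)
The plan is to reduce the bound on $f_1(x)=\sup_{t\in A_x}\|R(t)\|$ to a single scalar quantity by exploiting the directed-set structure of $(A_x,\prec)$ together with Lemma~\ref{R-creste}. Since $R(t)\prec R(s)$ whenever $t\prec s$ gives $\|R(t)\|\le\|R(s)\|$, and since any two partitions have a common refinement, the supremum over $A_x$ is the limit over the net of ever-finer partitions. In particular it suffices to estimate $\|R(t)\|$ for the uniform partition $t=(x/n,\dots,x/n)$ as $n\to\infty$, or more conveniently to set up a recursion in the number of blocks and pass to the limit.

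\textbf{The recursion.} Fix $x$ and write, for the uniform partition into $n$ pieces of size $h=x/n$, $\rho_n:=\|R((h,\dots,h);(q_1,\dots,q_{n-1}))\|$. Peeling off the outermost factor in $R(t)=\bigl(\c(h)\,\a(q_{n-1})\,\c(h)\cdots\a(q_1)\c(h)\bigr)(0)$: the inner part $\bigl(\c(h)\,\a(q_{n-2})\cdots\c(h)\bigr)(0)$ is $R$ on $n-1$ blocks, call its norm $\rho_{n-1}$; it is then fed through $z\mapsto q_{n-1}z$ (which multiplies the norm by $1$) and then through $\c(h):z\mapsto\frac{z+\tanh h}{1+z\tanh h}=(z+\tanh h)\sum_{k\ge0}(-z\tanh h)^k$. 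Taking $\|\cdot\|$ and using $\|ab\|\le\|a\|\|b\|$ together with the fact that the $q_{n-1}$-variable is fresh (so no cancellation or merging of monomials occurs), one gets a clean submultiplicative estimate
\begin{equation}\label{rec.rho}
\rho_n\le \frac{\tanh h+\rho_{n-1}}{1-\rho_{n-1}\tanh h},
\end{equation}
valid as long as $\rho_{n-1}\tanh h<1$, with $\rho_1=\tanh h$. Comparing \eqref{rec.rho} with the scalar iteration $y_n=\c(h)(y_{n-1})$ read off real-variable-wise, one recognizes that the majorant sequence is exactly $y_n=\tanh(nh)=\tanh x$ at $n$ steps — \emph{provided} the iteration stays in the region where the denominators are positive. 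So the key dichotomy is: either $\tanh(kh)<1$ stays bounded away from the blow-up of $\c$ for all $k\le n$, in which case $\rho_n\le\tanh x$ and we are done with a bound far better than claimed; or the intermediate denominators $1-\rho_{k-1}\tanh h$ force a loss. The real content is tracking this loss.

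\textbf{Extracting the stated bound.} To get the specific form $\dfrac{\sinh x}{2-\cosh x}$ I would not iterate $\tanh$ but rather linearize. Write $R(t)=(az+b)/(cz+d)$ at $z=0$, so $R(t)=b/d$; by Corollary~\ref{cor.5}, $\|b\|<\sinh x$ and the constant term of $d$ is $1$, with $\|d\|<\cosh x$. The point is that $\|b/d\|=\|b\cdot d^{-1}\|\le\|b\|\,\|d^{-1}\|$, and $d=1-(1-d)$ where $1-d$ has \emph{no constant term} and $\|1-d\|=\|d\|-1<\cosh x-1$; hence $\|d^{-1}\|=\|\sum_{k\ge0}(1-d)^k\|\le\sum_{k\ge0}(\cosh x-1)^k=\dfrac{1}{1-(\cosh x-1)}=\dfrac{1}{2-\cosh x}$, which converges precisely when $\cosh x<2$, i.e. $x<\log(2+\sqrt3)$. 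Combining, $\|R(t)\|<\dfrac{\sinh x}{2-\cosh x}$ uniformly in $t\in A_x$, giving \eqref{est.f.1}.

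\textbf{Main obstacle.} The delicate point is justifying that $1-d$ genuinely has zero constant term with $\|1-d\|=\|d\|-1$ — i.e. that all nonconstant coefficients of $d$ are nonnegative and sum to $\|d\|-1=(P+p)/2-1$ where $P,p$ are as in Lemma~\ref{expansion.4}. Nonnegativity is given by Corollary~\ref{cor.5} (the $\c(t_j)$ have nonnegative-coefficient matrix entries since $\tanh t_j\ge0$), and the constant-term normalization is exactly the last assertion of that corollary, so this goes through; but one must be careful that the geometric series $\sum(1-d)^k$ is being summed in the \emph{norm} $\|\cdot\|$ on multivariable series, which is legitimate because that norm is submultiplicative and because $\|1-d\|<1$ under the hypothesis $\cosh x<2$. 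A secondary check is that $b/d$ is the \emph{correct} closed form for $R(t)$ and that multiplying the series for $b$ and for $d^{-1}$ does not lose the bound — it does not, again by submultiplicativity. So the proof is short once Corollary~\ref{cor.5} is in hand; the only real work is the geometric-series manipulation and verifying its convergence range matches $(0,\log(2+\sqrt3))$.
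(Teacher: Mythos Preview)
Your actual proof---the argument in ``Extracting the stated bound'' and ``Main obstacle''---is correct and is exactly the paper's proof: write $R(t)=b/d$ via Corollary~\ref{cor.5}, use that the constant term of $d$ is $1$ with nonnegative coefficients so that $\|d-1\|=\|d\|-1<\cosh x-1$, and sum the geometric series for $d^{-1}$ when $\cosh x<2$. The preliminary ``Recursion'' section is superfluous and contains a slip (the majorant iteration $\sigma_n=(\sigma_{n-1}+\tanh h)/(1-\sigma_{n-1}\tanh h)$ is the \emph{tangent} addition law, not the hyperbolic one, so it yields $\tan(n\arctan\tanh h)\to\tan x$, not $\tanh x$), but since you explicitly abandon that line this does not affect the argument.
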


\begin{proof}
Let us choose $t=(t_1,\ldots, t_n)\in A_x$ and denote  $q=(q_1,\ldots, q_{n-1})$. We will show that
 $$\|R(t;q)\|\leq \frac{\sinh x}{2-\cosh x}. $$
 By definition 
 $$R(t;q)=\Big(\c (t_n)\a (q_{n-1})\c (t_{n-1})\cdots\a(q_1)\c (t_1)\Big) (0).$$
 From Corollary \ref{cor.5} it follows that
 $$\left(\c (t_n)\a (q_{n-1})\c (t_{n-1})\cdots\a
(q_1)\c (t_1) \alpha(q_0) \right) (z)=\frac{az+b}{cz+d}.$$
Since $\a (q_0)(0)=0$ when we take $z=0$ in the above equation we get
$$R(t;q)=\frac bd.$$
Using  Corollary \ref{cor.5}, we have that
$$\| b\|<\sinh x\ \text{and}\ \|d-1\|=\|d\|-1<\cosh x-1.$$
Then, for any $x$ satisfying $\cosh x<2$ the following holds
$$\|R(t;q)\|\leq \frac{\|b\|}{1-\|1-d\|}\leq \frac{\sinh
x}{2-\cosh x}.$$ 
(Note that $a,b,c,d\in\RR [q_0,q_1,\ldots,q_{n-1}]$ but the variable
$q_0$ is superfluous in $\frac bd=R(t;q)$. It does not appear effectively.)

Since the partition $t\in A_x$ has been arbitrarily chosen we obtain
that estimate \eqref{est.f.1} holds for all $x<{\rm arccosh}(2)
=\log(2+\sqrt 3)$.
\end{proof}

\begin{lemma}\label{f.crescatoare}
 For any positive integer $r$ the function $f_r$ is  increasing.
\end{lemma}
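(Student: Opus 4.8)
The plan is to show that for any $0<x<y$ and any partition $t\in A_x$ there is a partition $\hat t\in A_y$ with $\|R(\hat t)^r\|\geq\|R(t)^r\|$; taking the supremum over $t\in A_x$ then gives $f_r(x)\leq f_r(y)$. The natural candidate is obtained by \emph{prepending} an interval of length $y-x$: if $t=(t_1,\dots,t_n)\in A_x$, set $\hat t=(y-x,t_1,\dots,t_n)\in A_y$, which has $n+1$ components and hence $n$ auxiliary variables $q_1,\dots,q_n$ in the series $R(\hat t)$.

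The key observation is that evaluating $R(\hat t;q_1,\dots,q_n)$ at $q_1=0$ recovers $R(t)$. Indeed, $\a(0)$ is the constant map $z\mapsto 0$, so in the composition $R(\hat t;q)=\big(\c(\hat t_{n+1})\a(q_n)\cdots\a(q_1)\c(\hat t_1)\big)(0)$ the innermost block $\a(q_1)\c(y-x)$ applied to $0$ produces $0$ once $q_1=0$, exactly as if the prepended interval were absent; what is left is $\big(\c(t_n)\a(q_n)\c(t_{n-1})\cdots\a(q_2)\c(t_1)\big)(0)$, which is precisely $R(t)$ written in the variables $q_2,\dots,q_n$. Before using this one should note that $R(\hat t)$ is a bona fide formal power series in $q_1,\dots,q_n$: after each application of some $\a(q_j)$ the current expression has zero constant term, so the subsequent $\c$-map, whose denominator is $1+z\tanh(\cdot)$, is invertible in the formal power series ring; the nonzero constant term $\tanh(y-x)$ created by the first map $\c(y-x)$ is immediately killed by $\a(q_1)$, and the same remark applies to $R(t)$.

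Granting this, expand $R(\hat t)^r=\sum_{k\geq 0}q_1^k\,P_k(q_2,\dots,q_n)$ as a formal power series. Since $\|\cdot\|$ is the sum of the absolute values of the coefficients, and monomials of distinct $q_1$-degree are distinct, $\|R(\hat t)^r\|=\sum_{k\geq 0}\|P_k\|\geq\|P_0\|$. But $P_0$ is obtained by setting $q_1=0$, and evaluation at $q_1=0$ is a ring homomorphism, so $P_0=\big(R(\hat t)|_{q_1=0}\big)^r=R(t)^r$; hence $\|R(\hat t)^r\|\geq\|R(t)^r\|$, an inequality valid in $[0,+\infty]$ even if either side is infinite. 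Taking the supremum over $t\in A_x$ gives $f_r(x)\leq f_r(y)$, which is the assertion.

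The argument is essentially formal and I do not expect a serious obstacle; the only points that need a little care are the bookkeeping that makes $R(\hat t)$ a legitimate formal power series (the transient nonzero constant term after the first $\c$-map) and the relabelling of the auxiliary variables used to identify $R(\hat t)|_{q_1=0}$ with $R(t)$. This is the same kind of manipulation already employed in the proof of Lemma \ref{R-creste}, where refinement of a partition was handled by inserting factors $\a(1)$; here the opposite, coarsening, direction is handled by specializing a fresh variable to $0$.
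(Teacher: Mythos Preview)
Your argument is correct and genuinely different from the paper's. The paper \emph{appends} two intervals of length $(y-x)/2$ to form $s=(t_1,\ldots,t_n,z/2,z/2)\in A_y$ and invokes the involution identity $\c(t)\a(-1)\c(t)\a(-1)=\mathrm{id}$ to obtain $R(t;q_1,\ldots,q_{n-1})=R(s;q_1,\ldots,q_{n-1},-1,-1)$; since specializing the two extra variables to a value of modulus~$1$ cannot increase the $\|\cdot\|$-norm, $\|R(t)^r\|\leq\|R(s)^r\|$ follows. You instead \emph{prepend} a single interval and specialize the new variable to~$0$, using only that $\a(0)$ is the zero map. Your route is slightly more elementary---no algebraic identity beyond $\a(0)=0$ is needed, and no splitting in half---while the paper's route keeps the argument visually parallel to Lemma~\ref{R-creste} (there one specializes extra variables to~$1$, here to~$-1$). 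Both rely on the same underlying principle that evaluating a multivariable series at a point in the closed unit polydisc does not increase the coefficient $\ell^1$-norm; your choice of the point~$0$ simply makes this trivial, since setting $q_1=0$ just discards all monomials with $j_1\geq 1$.
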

\begin{proof} Let us observe that for any $d\in [0,1)$ we have
 $\b (d)\a (-1)\b (d)\a
(-1)=1$. Thus, for any nonnegative $t$,  $\gamma (t)$ satisfies $\c
(t)\a (-1)\c (t)\a (-1)=1$. 

Let us now choose  $0<x<y$,  where $y=x+z$ with $z>0$. Let
$t=(t_1,\ldots,t_n)\in A_x$. Then $s:=(t_1,\ldots,t_n, z/2, z/2)\in
A_y$. Since $\c ( z/2)\a (-1)\c ( z/2)\a (-1)=1$ we have:
\begin{align*}
R(t;q_1,\ldots,q_{n-1})&=\left(\c (\frac z2)\a (-1)\c (\frac z2)\a (-1)\c
(t_n)\a (q_{n-1})\cdots\a (q_1)\c(q_1)\right) (0)\\
&=R(s;(q_1,\ldots,q_{n-1},-1,-1))
\end{align*}
Hence for any integer $r\geq 1$,
$$R(t;q_1,\ldots,q_{n-1})^r=R(s;q_1,\ldots,q_{n-1},-1,-1)^r,$$ which
implies that $||R(t)^r||\leq ||R(s)^r||$. This implies that  for any $t\in A_x$
there is some $s\in A_y$ with $||R(t)^r||\leq ||R(s)^r||$. It follows
that $f_r(x)\leq f_r(y)$ so $f_r$ is an increasing function

 \end{proof}

We denote $I=\{ x\in (0,\infty )\mid f_1(x)<\infty\}$. In  view of Lemma \ref{first.estimate} and 
Lemma \ref{f.crescatoare}, the set  $I$ is an interval that includes $(0,\log(2+\sqrt 3))$.
Moreover, all the functions $f_r$, $r\geq 1$, are finite on  interval $I$ since $f_r(x)\leq f_1^r(x)$.
Now we prove that $f_r$ are differentiable.

\begin{lemma} \label{lemma7}The set $I$ is an open interval. For any integer $r\geq 1$,
   function $f_r$ is differentiable on $I$ and satisfies
    $f_r'=r(f_{r-1}+f_{r+1})$.
\end{lemma}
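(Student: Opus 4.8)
The plan is to establish, uniformly on compact subintervals of $I$, the two–sided estimate
\[ f_r(x+\eps)=f_r(x)+\eps\,r\big(f_{r-1}(x)+f_{r+1}(x)\big)+O(\eps^2)\qquad(\eps\downarrow 0), \]
and then to read off from it, in order, that $I$ is open, that each $f_r$ is locally Lipschitz (hence continuous), and finally that $f_r$ is differentiable with $f_r'=r(f_{r-1}+f_{r+1})$. Throughout I use that $f_{r_1+r_2}\le f_{r_1}f_{r_2}$, so all the $f_r$ are finite on $I$, and that $\|R(\cdot)^j\|$ is non-decreasing along refinements (Lemma~\ref{R-creste}).

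\emph{Lower bound.} Given $t=(t_1,\dots,t_n)\in A_x$ I append one block: set $s=(t_1,\dots,t_n,\eps)\in A_{x+\eps}$ with a fresh variable $q_n$ inserted between $\gamma(t_n)$ and $\gamma(\eps)$. Since $\gamma(\eps)\big(q_nR(t;q)\big)=\frac{q_nR(t;q)+\tanh\eps}{1+\tanh\eps\,q_nR(t;q)}$, expanding in $\tau:=\tanh\eps$ gives $R(s)=q_nR(t)+\tau(1-q_n^2R(t)^2)+E$ with $\|E\|=O(\tau^2)$, the constant depending only on $f_1(x)\ge\|R(t)\|$; hence
\[ R(s)^r=q_n^{\,r}R(t)^r+\tau r\,q_n^{\,r-1}R(t)^{r-1}-\tau r\,q_n^{\,r+1}R(t)^{r+1}+E',\qquad \|E'\|=O(\tau^2). \]
The three displayed terms carry three distinct powers of $q_n$, so they cannot cancel one another, and therefore $\|R(s)^r\|\ge\|R(t)^r\|+\tau r\big(\|R(t)^{r-1}\|+\|R(t)^{r+1}\|\big)-O(\tau^2)$. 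To pass to $f_r(x+\eps)$ I pick a single $t^\ast\in A_x$ that is simultaneously near-optimal for $\|R(\cdot)^{r-1}\|$, $\|R(\cdot)^{r}\|$ and $\|R(\cdot)^{r+1}\|$ — possible because $(A_x,\prec)$ is directed and each functional is monotone along refinements — and conclude $f_r(x+\eps)\ge f_r(x)+\eps\,r(f_{r-1}(x)+f_{r+1}(x))-C\eps^2$.

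\emph{Upper bound.} Take any $s\in A_{x+\eps}$; by Lemma~\ref{R-creste} we may refine it so that the split at level $x$ is realized, $s\prec s'=(v,w)$ with $v\in A_x$, $w\in A_\eps$. Inserting a fresh variable $q_n$ between the $v$-block and the $w$-block, $R(s';\cdot)=\Gamma_w(Z)$ with $Z:=q_nR(v;q)$ and $\Gamma_w=\gamma(w_l)\alpha(p_{l-1})\cdots\gamma(w_1)$, say $\Gamma_w(z)=\frac{Az+B}{Cz+D}$. Corollary~\ref{cor.5} gives, \emph{uniformly in the number of pieces of $w$}, $\|B\|,\|C\|<\sinh\eps$, $D=1+D_0$ with $\|D_0\|<\cosh\eps-1$, and $A=M+A_0$ with $M$ the monomial $p_1\cdots p_{l-1}$ and $\|A_0\|<\cosh\eps-1$. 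Since $\|D_0+CZ\|\le(\cosh\eps-1)+\sinh\eps\,f_1(x)<1$ for $\eps$ small, $(CZ+D)^{-r}$ expands in a convergent multivariable series; multiplying the expansions of $(AZ+B)^r$ and $(CZ+D)^{-r}$ and keeping track of the orders in $\eps$ yields
\[ \|R(s')^r\|\le\|R(v)^r\|+r\sinh\eps\big(\|R(v)^{r-1}\|+\|R(v)^{r+1}\|\big)+O(\eps^2), \]
the $O(\eps^2)$ depending only on $r$ and $f_1(x)$. Since $\|R(v)^j\|\le f_j(x)$ and $\|R(s)^r\|\le\|R(s')^r\|$, taking the supremum over $s\in A_{x+\eps}$ gives $f_r(x+\eps)\le f_r(x)+\eps\,r(f_{r-1}(x)+f_{r+1}(x))+C\eps^2$, which together with the previous step proves the two-sided estimate.

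\emph{Conclusion.} Applied at a would-be finite right endpoint of $I$, the upper bound forces $f_1$ to remain finite slightly beyond it, so $I$ is open. The two-sided estimate (valid for every $j\ge 1$) gives $|f_j(x+\eps)-f_j(x)|\le C\eps$ on compacts, hence each $f_j$ is locally Lipschitz, hence continuous, on $I$. Then the estimate yields at once that the right derivative of $f_r$ at $x$ equals $r(f_{r-1}(x)+f_{r+1}(x))$, and, applied at $x-\eps$ together with the continuity of $f_{r\pm1}$, the equal left derivative; therefore $f_r\in C^1(I)$ with $f_r'=r(f_{r-1}+f_{r+1})$. I expect the real difficulty to be the lower bound, which demands a \emph{lower} estimate for the norm of a multivariable series, where cancellation is the enemy: this is exactly why the increment block is given its own new variable $q_n$, so that the first-order correction lands in $q_n$-degrees disjoint from the leading term, and why the near-optimal partition must be chosen uniformly for three consecutive powers by exploiting directedness of $(A_x,\prec)$.
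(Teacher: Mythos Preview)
Your proof is correct and follows essentially the same route as the paper's: the same one-block append for the lower bound (with the key observation that the three first-order terms live in distinct $q_n$-degrees), the same refinement-to-a-split plus Corollary~\ref{cor.5} for the upper bound, and the same passage to openness of $I$, continuity, and two-sided differentiability. The one place where you are more careful than the paper is the lower bound: the paper passes from ``for every $t\in A_x$'' to the supremum in a single line, whereas you explicitly invoke directedness of $(A_x,\prec)$ together with Lemma~\ref{R-creste} to produce a single $t^\ast$ that is simultaneously near-optimal for $\|R(\cdot)^{r-1}\|$, $\|R(\cdot)^{r}\|$, and $\|R(\cdot)^{r+1}\|$; this is exactly what is needed to turn the pointwise inequality into the claimed estimate for $f_r(x+\eps)$.
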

\begin{proof} 
Let  $\eps$ and $x$  be positive numbers.
For any
partition $s\in A_{x+\eps}$ there is a finer partition, $\tilde s$, of
the form $\tilde s=(t,t')\in A_x\times A_\e$,  (because for any
partition of $[0,x+\eps]$ there is a finer one containing $x$). Then  
$$\|R(s)\|\leq \|R(\tilde s)\|\leq f_1(x+\eps)$$
and so $$f_1(x+\e )=\sup_{(t,t')\in A_x\times A_\e}||R(t,t')||.$$ 

Let us consider $(t,t')\in A_x\times A_x$, $t=(t_1,\ldots, t_n)$,
$t'=(t_{n+1},\ldots, t_m)$.
Denoting  $q=(q_1,\ldots,q_{n-1})$ and $q'=(q_n,\ldots,q_{m-1})$  we obtain that
$$R(t,t';q,q')=\left(\c( {t_m})\a (q_{m-1})\cdots\c (t_{n+1})\a
(q_n)\right) (R(t;q)).$$
Since $t\in A_\eps$ by Corollary \ref{cor.5} we have that 
$$\c (t_m)\a (q_{m-1})\cdots\c (t_{n+1})\a (q_n)$$ is given by 
$z\mapsto\frac{az+b}{cz+d}$, where $a,b,c,d\in\RR [q_n,\ldots,q_{m-1}]$
have non-negative coefficients, $||a||=||d||<\cosh\e$,
$||b||=||c||<\sinh\e$, the coefficient of $q_n\cdots q_{m-1}$ in $a$
and the constant term of $d$ are $1$. 
Then  $||a||=1+||a-q_n\cdots q_{m-1}||$ and $||d||=1+||d-1||$ so 
$$||a-q_n\cdots
q_{m-1}||=||a||-1=||d||-1=||d-1||<\cosh\e -1.$$
 Then
$$R(t,t')=\frac{aR(t)+b}{cR(t)+d}=\frac{aR(t)+b}{1+(cR(t)+d-1)}.$$
Since $t\in A_x$ we have $||R(t)||\leq f_1(x)<\infty$. We also have
$\cosh\eps =1+O(\eps^2)$ and $\sinh\eps =O(\eps )$ so
$$aR(t)+b=q_n\cdots q_{m-1}R(t)+(a-q_n\cdots q_{m-1})R(t)+b=q_n\cdots
q_{m-1}R(t)+b+O(\e^2)$$ 
and $$cR(t)+d=1+cR(t)+(d-1)=1+cR(t)+O(\e^2).$$ 
Then for $\eps<\eps_0$, $\eps_0$ small enough depending on $f_1(x)$,
the following holds
\begin{align*}
R(t,t')&=\big(q_n\cdots
q_{m-1}R(t)+b+O(\e^2)\big)\big(1-cR(t)+O(\e^2)\big)\\
&=q_n\cdots
q_{m-1}R(t)+b-q_n\cdots q_{m-1}cR(t)^2+O(\e^2).
\end{align*}
 Since $b-q_n\cdots q_{m-1}cR(t)=O(\e )$ this implies
that for any $r\geq 1$ we have 
\begin{align*}
R(t,t')^r&=(q_n\cdots
q_{m-1})^rR(t)^r+r(q_n\cdots
q_{m-1})^{r-1}R(t)^{r-1}(b-q_n\cdots q_{m-1}cR(t)^2)+O(\e^2)\\
&=(q_n\cdots
q_{m-1})^rR(t)^r+r(q_n\cdots
q_{m-1})^{r-1}(bR(t)^{r-1}-q_n\cdots q_{m-1}cR(t)^{r+1})+O(\e^2).
\end{align*}
 Using that $\sinh\e =\e +O(\e^2)$ it follows that 
\begin{align*}
||R(t,t')^r||&\leq ||R(t)^r||+r||R(t)^{r-1}||\cdot
||b||+r||R(t)^{r+1}||\cdot ||c||+O(\e^2)\\
&\leq f_r(x)+rf_{r-1}(x)\sinh\e
+rf_{r+1}(x)\sinh\e +O(\e^2)\\
&=f_r(x)+r(f_{r-1}(x)+f_{r+1}(x))\e
+O(\e^2). 
\end{align*}
Thus, for  $\eps$ small enough we have
\begin{equation}\label{upper.der}
f_r(x+\e
)\leq
f_r(x)+r(f_{r-1}(x)+f_{r+1}(x))\e +O(\e^2).
\end{equation}
This implies that $I$ is an open interval.

For the reverse inequality we will take $m=n+1$ so $t'$ has dimension
one, $t'=(\e )$, and $q'=q_n$. Then $(t,\e
)\in A_{x+\e}$ so $f_r(x+\e )\geq ||R(t,\e )^r||$. We have
$R(t,\e;q,q_n)=\c (\e )\a (q_n)(R(t,q)),$
i.e. 
$$R(t,\e)=\frac{q_nR(t)+\tanh\e}{q_n\tanh\e R(t)+1}.$$
Since $\tanh\eps =\eps +O(\eps^2)$ by same argument as above we prove
that 
\begin{align*}
R(t,\e )&=q_nR(t)+\eps (1- q_n^2R(t)^2) +O(\e^2).
\end{align*}
and, more generally,
\begin{align*}
R(t,\e )^r&=q_n^rR(t)^r+r\eps q_n^{r-1} ( R(t)^{r-1}- q_n^2R(t)^{r+1}) +O(\e^2).
\end{align*}
Since 
$R(t)^{r-1},R(t)^r,R(t)^{r+1}\in\RR [[q_0,\ldots,q_{n-1}]]$ it follows that
 $$||q_n^rR(t)^r+r\e
(q_n^{r-1}R(t)^{r-1}-q_n^{r+1}R(t)^{r+1})||=||R(t)^r||+r\e
||R(t)^{r-1}||+r\e ||R(t)^{r+1}||.$$
Hence  for any $t\in A_x$ and $\eps$ small enough
 \begin{align*}
 f_r(x+\e) &\geq \|R(t,\eps)^r\| \geq
 ||R(t)^r||+r\e
(\|R(t)^{r-1}\|+||R(t)^{r+1}||)+O(\e^2)
\end{align*}
and then 
\begin{equation}\label{lower.der}
f_r(x+\eps)\geq
f_r(x)+\e
r(f_{r-1}(x)+f_{r+1}(x))+O(\e^2).
\end{equation}

Using \eqref{upper.der} and \eqref{lower.der} we obtain 
that  $f_r$ satisfies
$$f_r(x+\eps)=
f_r(x)+\e
r(f_{r-1}(x)+f_{r+1}(x))+O(\e^2).$$
 Thus function $f_r$ is right  differentiable and satisfies
$$\lim_{e\searrow 0}\frac{f_r(x+\e )-f_r(x)}\e=r(f_{r-1}(x)+f_{r+1}(x)).$$
Moreover, since $f_k(x-\eps)\leq f_k(x)\leq f_1^k(x)$, $k\in
\{r-1,r+1\}$, by applying the same argument as in the proof of
\eqref{upper.der} and \eqref{lower.der} to
$x'=x-\eps$
we obtain that 
$$f_r(x)=f_r(x-\eps)+\e
r(f_{r-1}(x-\eps)+f_{r+1}(x-\eps))+O(\e^2)=f_r(x-\eps)+O(\eps)$$
which proves that $f_r$ is also left continuous.

For the left derivative of $f_r$ at $x$ we apply
 the previous analysis to the point   $x'=x-\e$ 
 $$\frac{f_r(x)-f_r(x-\e )}\e=r(f_{r-1}(x-\e )+f_{r+1}(x-\e ))+O(\e
).$$ Since $f_{r-1},f_{r+1}$ are continuous  we obtain 
$$\lim_{\e\searrow 0}\frac{f_r(x)-f_r(x-\e
)}\e=r(f_{r-1}(x)+f_{r+1}(x)).$$
The proof of Lemma \ref{lemma7} is now finished.
 \end{proof}

\begin{theorem}\label{f}
 We have $I=(0,\pi /2)$ and for any $r\geq 1$ function $f_r$  is given by 
 $$f_r(x)=\tan^rx, \quad x\in I.$$
\end{theorem}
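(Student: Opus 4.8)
The plan is to prove the statement in two movements: first an a priori upper bound $f_r(x)\le\tan^r x$, obtained by turning the relation of Lemma~\ref{lemma7} into a differential inequality, and then an exact identification of $f_r$ through the generating function $F(x,s)=\sum_{r\ge0}f_r(x)s^r$. For the first movement, note that on $I$ Lemma~\ref{lemma7} gives $f_1'=1\cdot(f_0+f_2)=1+f_2$, while the submultiplicativity $f_{r_1+r_2}\le f_{r_1}f_{r_2}$ recorded above yields $f_2\le f_1^2$; hence $f_1'\le 1+f_1^2$ on $I$. Since $f_1$ is continuous on $I$ with $f_1(x)\to0$ as $x\to0^+$ (Lemma~\ref{first.estimate}) and $I$ contains a neighbourhood of $0$, integrating $(\arctan f_1)'\le1$ over $(a,x)\subset I$ and letting $a\to0^+$ gives $\arctan f_1(x)\le x$, hence $f_1(x)\le\tan x$ for $x\in I\cap(0,\pi/2)$, and therefore $0\le f_r(x)\le f_1(x)^r\le\tan^r x$ there.

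Next I would work with $F(x,s)=\sum_{r\ge0}f_r(x)s^r$, which converges for $x\in I$ and $|s|<1/f_1(x)$, as does $\sum_r f_r'(x)s^r$ (since $f_r'=r(f_{r-1}+f_{r+1})\le r(f_1^{r-1}+f_1^{r+1})$). Substituting the recursion $f_r'=r(f_{r-1}+f_{r+1})$ together with $f_0\equiv1$ and rearranging, one obtains the first order linear equation
$$\partial_x F=(1+s^2)\,\partial_s F+sF-\frac{F-1}{s}.$$
A direct check shows that $F_\star(x,s):=\dfrac1{1-s\tan x}=\sum_{r\ge0}\tan^r x\,s^r$ solves the same equation. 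Hence $W:=F_\star-F=\sum_{r\ge1}w_r(x)s^r$, with $w_r:=\tan^r x-f_r$, satisfies the homogeneous equation $\partial_x W=(1+s^2)\partial_s W+sW-W/s$ on the region $\{x\in I\cap(0,\pi/2),\ |s|<1/f_1(x)\}$, obeys $0\le w_r(x)\le\tan^r x$ by the first movement (so $W(x,\cdot)$ is analytic near $s=0$, with radius of convergence at least $\cot x$), and has $w_r(0^+)=0$.

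I would then solve $W$ along characteristics. These are the curves $s(t)=\tan(\arctan s_0-t)$, along which $\tfrac{d}{dt}W(t,s(t))=(s-1/s)\,W$. Fixing $x_0\in I\cap(0,\pi/2)$ and $0<s_1<\cot x_0$ and choosing $s_0=\tan(x_0+\arctan s_1)$, for $t\in[0,x_0]$ the argument $\arctan s_0-t$ stays in $(\arctan s_1,x_0+\arctan s_1)\subset(0,\pi/2)$; thus $s(t)>0$, the integral $\int_0^{x_0}(s(t)-1/s(t))\,dt$ is finite, and $s(t)<\cot t\le 1/f_1(t)$, so the whole characteristic lies in the domain where $F$ converges and the equation is legitimate. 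Integrating the linear ODE from $t>0$ to $t=x_0$ and letting $t\to0^+$, using $W(0^+,s_0)=0$, forces $W(x_0,s_1)=0$; since this holds for all small $s_1>0$ and $W(x_0,\cdot)$ is analytic near $0$, we conclude $w_r(x_0)=0$ for every $r$, i.e. $f_r(x_0)=\tan^r x_0$, and as $x_0$ is arbitrary, $f_r\equiv\tan^r$ on $I\cap(0,\pi/2)$. To finish, recall that $f_1$ is increasing (Lemma~\ref{f.crescatoare}) and that $I$ is open (Lemma~\ref{lemma7}), so $I=(0,L)$. If $L<\pi/2$, then $f_1=\tan$ is bounded on $(0,L)$; refining any $t\in A_L$ until $L-\eps$ becomes a breakpoint and estimating the extra $\eps$-block by Corollary~\ref{cor.5} (together with Lemma~\ref{R-creste}) shows $\sup_{t\in A_L}\|R(t)\|<\infty$, so $L\in I$, a contradiction. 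If $L>\pi/2$, then $f_1=\tan$ on $(0,\pi/2)$ blows up at $\pi/2$, so by monotonicity $f_1\equiv\infty$ on $[\pi/2,\infty)$, contradicting $(\pi/2,L)\subset I$. Hence $L=\pi/2$, $I=(0,\pi/2)$, and $f_r(x)=\tan^r x$ on $I$.

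The delicate point is the characteristic step: one must make sure that, for every $t\in(0,x_0]$, the characteristic through $(x_0,s_1)$ stays inside the strip $|s|<1/f_1(t)$ where the series $F$ converges and the differentiated identities hold, and that the coefficient $s-1/s$ of the transport ODE remains integrable down to $t=0$. Both are guaranteed precisely by the a priori bound $f_1(t)\le\tan t$ of the first movement, which forces $1/f_1(t)\ge\cot t>s(t)$; without this bound the argument would be circular.
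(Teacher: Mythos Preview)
Your proof is correct and follows essentially the same route as the paper: first the differential inequality $f_1'\le 1+f_1^2$ giving $f_1\le\tan$, then a generating function satisfying a first-order PDE solved by characteristics, and finally monotonicity to pin down $I=(0,\pi/2)$. The only cosmetic differences are that the paper packages the generating function as $g(x,y)=\sum_{r\ge1}f_r(x)y^{r-1}$ (so $F=1+sg$) and solves the PDE directly rather than verifying the candidate $F_\star$ and proving uniqueness for $W=F_\star-F$; your treatment of the endpoint issue $I=(0,\pi/2)$ is in fact more carefully spelled out than the paper's.
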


\begin{proof} 
We first show that  $[0,\pi/ 2)\subseteq I$ and for any $r\geq 1 $ the following holds
$$f_r(x)\leq\tan^rx,\quad  \forall\  x\in (0,\pi/2).$$
For any $x\in I$ we have $f_1'(x)=f_0(x)+f_2(x)\leq
1+f_1(x)^2$.  Lemma \ref{first.estimate} gives us  that $\lim _{x\rightarrow 0}f_1(x)=0$ and then by integrating the last inequality we obtain 
that $\arctan f_1(x)\leq x$. Thus $f_1(x)\leq\tan x$. For  $r\geq 2$ similar estimates hold since $f_r(x)\leq (f_1(x))^r$.

%

Let $S=\{ (x,y)\in (0,\pi/ 2)\times\RR\mid\, |y\tan x|<1\}$. 
%
Using the properties of functions $f_r$ above we get that 
 function $g$ defined by
$$g(x,y)=f_1(x)+f_2(x)y+f_3(x)y^2+\cdots$$ is well defined and
differentiable on $S$. Explicit computations show that $g$ satisfies
the following first order equation
$$(y^2+1)g_y(x,y)-g_x(x,y)=-2yg(x,y)-1, \quad \forall \ (x,y)\in S.$$ 
In order to solve it we need some boundary conditions. Observe that 
since $f_r(x)\leq \tan ^rx$ for all $ r\geq 1$, function $g$ satisfies
$\lim _{x\rightarrow 0} g(x,y)=0$ for all $y\in\RR$. Solving the above
equation by the method of characteristics we obtain that
$$g(x,y)=\frac{\tan
x}{1-y\tan x}.
$$

Developing in $y$ power series we get that 
 $f_r(x)=\tan^rx$, as claimed. In particular,
$f_1(x)=\tan x$. 
Since $f_1$ is increasing we also obtain that $I=(0,\pi/2)$ and the proof is finished. 
\end{proof}

\subsection{Proofs of the two Lemmas}
We are now able to prove  Lemma \ref{imp} and Lemma \ref{negative}.

\begin{proof}[Proof of Lemma \ref{imp}]
By definition of $E$ and Theorem \ref{f}
we have that 
\begin{align*}
\|Q_{n+1}\|_{AP}&=\|E(|d_1|,\ldots, |d_n|)\|=\|R(\arctanh |d_1|,\ldots, \arctanh |d_n|)\|\\
&\leq f_1\Big(\sum _{k=1}^n \arctanh |d_k|\Big)=\tan \Big (\sum _{k=1}^n \arctanh |d_k|\Big).
\end{align*}
Then estimate \eqref{est.ap} follows.
\end{proof}

\begin{proof}[Proof of Lemma \ref{negative}]
Let $\a >0$. By Theorem \ref{f}, $f_1(\a )=\tan\a$ if $\a <\pi/2$ and
$f_1(\a )=\infty$ if $\a\geq\pi/2$. This means that for any $\e,N>0$
there exists a fine enough partition $t=(t_1,\ldots, t_{n-1})\in
A_\alpha$ such that $\|R(t)\|\geq\tan\a -\e$ if $\a <\pi/2$ and
$\|R(t)\|\geq N$ if $\a\geq\pi/2$. Choosing, if necessary, finer
partitions we can assume that $t_k<\arctanh d$, $k=1,\ldots, n-1$.

Choosing $\tilde d_k=\tanh t_k$, $k=1,\ldots, n-1$, we have $0< \tilde
d_k\leq d$, $\sum_{k=1}^{n-1}\arctanh\tilde
d_k=\sum_{k=1}^{n-1}t_k=\a$ and 
$$\|E(\tilde d_1,\ldots, \tilde d_{n-1})\|=\|R(t)\|\geq\tan\a
-\e\quad\text{or}\quad N,$$ 
corresponding to $\a <\pi/2$ or $\a\geq\pi/2$, respectively.

Then for any $\{c_k\}_{k=1}^{n-1}$ linearly independent over $\QQ$ and
any $\{d_k\}_{k=1}^{n-1}$ with $|d_k|=\tilde d_k$ we have by Lemma
\ref{ineg.q.E} that $\{Q_k\}_{k=1}^n$ defined in \eqref{def.q}
satisfies
$$\|Q_n\|_{AP}=\|E(|d_1|,\ldots,
|d_{n-1}|)\|=\|E(\tilde d_1,\ldots, \tilde d_{n-1})\|\geq\tan\a
-\e\quad\text{or}\quad N,$$
accordingly, and the proof is finished.
\end{proof}

%
%
%

{\bf Acknowledgments.} L.I. was partially supported by Grant PN-II-ID-PCE-2011-3-0075 of the Romanian National Authority for Scientific Research, CNCS -- UEFISCDI, by Grant MTM2011-29306-C02-00, MICINN, Spain, ERC Advanced Grant FP7-246775 NUMERIWAVES and FA9550-15-1-0027 of AFOSR.

%

E.Z.  was supported by the Advanced Grant FP7-246775 NUMERIWAVES of the European Research Council Executive Agency, FA9550-14-1-0214 of the EOARD-AFOSR, FA9550-15-1-0027 of AFOSR, the BERC 2014-2017 program of the Basque Government, the MTM2011-29306-C02-00 and SEV-2013-0323 Grants of the MINECO and a Humboldt Award at the University of Erlangen-N\"urnberg.

%
%

\end{document}